\mag=\magstep1

\documentclass[10pt,a4paper,dviout]{amsart} 
\usepackage{amssymb,latexsym} 
\usepackage{color} 
\usepackage{graphicx}
\usepackage{comment}
\usepackage{datetime}
\usepackage{comment}
\usepackage{fancyhdr}

\input xy
\xyoption{all}

\usepackage{amsmath, amssymb, eucal, amscd, color, amsthm}

\def\R{\Bbb R}
\def\si{\sigma}

\def\cal{\mathcal}
\def\co{{\cal O}}
\def\Q{\Bbb Q}
\def\part{\partial}

\def\we{\wedge}
\def\e{\epsilon}
\def\dis{\displaystyle}

\def\P{\mathbb P}
\def\A{{\mathbb A}}
\def\C{\mathbb C}
\def\C0{$C_0$}
\def\s{{\square}}
\def\bl{\bigl[}
\def\br{\bigr]}

\def\sd{\operatorname{sd}}
\def\id{\operatorname{id}}
\def\codim{{{\rm codim}\,}}
\def\ov{\overline}
\def\Max{{\rm Max}}
\def\p1{\prec}
\def\sing{{\operatorname{sing}}}
\def\<{\langle}
\def\>{\rangle}

\def\sd{\operatorname{sd}}

\def\id{\operatorname{id}}
\newcommand{\sign}{\operatorname{sign}}
\newcommand{\Alt}{\operatorname{Alt}}
\def\res{\operatorname{Res}}

\newtheorem{theorem}{Theorem}[section] 
\newtheorem{proposition}[theorem]{Proposition} 
 
\newtheorem{definition}[theorem]{Definition} 
\newtheorem{conjecture}[theorem]{Conjecture} 
\newtheorem{remark}[theorem]{Remark} 
\newtheorem{lemma}[theorem]{Lemma} 
\newtheorem{lemma-definition}[theorem]{Lemma-Definition} 
\newtheorem{proposition-definition}[theorem]{Proposition-Definition}

\newtheorem{notation}[theorem]{Notation}

\newcommand{\CC}{{\mathbb{C}}}
\newcommand{\PP}{{\mathbb{P}}}
\newcommand{\QQ}{{\mathbb{Q}}}
\newcommand{\RR}{{\mathbb{R}}}
\newcommand{\ZZ}{{\mathbb{Z}}}

\newcommand\Spec{\operatorname{Spec}}

\newcommand\CH{\operatorname{CH}}

\newcommand{\Ker}{\operatorname{Ker}}

\newcommand\sgn{\operatorname{sgn}}

\newcommand{\cC}{{\mathcal C}}
\newcommand{\cD}{{\mathcal D}}

\newcommand{\cN}{{\mathcal N}}
\newcommand{\cP}{{\mathcal P}}

\newcommand{\bL}{ {\mathbf{Li}} }

\newcommand{\al}{\alpha}  \newcommand{\ga}{\gamma} 
 \newcommand{\eps}{\epsilon} \newcommand{\la}{\lambda}
\newcommand{\Ga}{\Gamma}

\newcommand{\BP}{{\mathbb{P}}}

\newcommand{\sq}{\square} 

\newcommand{\ot}{\otimes}

\newcommand{\vphi}{{\varphi}}

\newcommand{\injto}{\hookrightarrow}

\newcommand{\mapright}[1]{%
  \smash{\mathop{%
    \hbox to 1cm{\rightarrowfill}}\limits^{#1} } } 

\newcommand{\smapr}[1]{%
  \smash{\mathop{%
    \hbox to 0.5cm{\rightarrowfill}}\limits^{#1} } } 
\newcommand{\maprb}[1]{%
  \smash{\mathop{%
    \hbox to 1cm{\rightarrowfill}}\limits_{#1} } } 
\newcommand{\mapleft}[1]{%
  \smash{\mathop{%
    \hbox to 1cm{\leftarrowfill}}\limits^{#1} } }

\newcommand{\maplb}[1]{%
  \smash{\mathop{%
    \hbox to 1cm{\leftarrowfill}}\limits_{#1} } }

\def\alt{\operatorname{alt}}

\makeatletter
  
  \@addtoreset{equation}{section}
 \makeatother

\makeindex
\usepackage{version}	

\begin{document}
\fancyhf{}
\pagestyle{fancy}
\fancyhead[CO]{K  Kimura}
\fancyhead[CE]{A conjecture of Goncharov  and the co-Lie algebra of Bloch-Kriz mixed Tate motives}
\fancyfoot[C]{\thepage}
\title{On a relation of a conjecture of Goncharov to the co-Lie algebra of Bloch-Kriz mixed Tate
motives
} 
\author{Kenichiro Kimura }

\maketitle
\begin{abstract} Goncharov defined for each field $F$ and an integer $n$ greater than
1  a certain group $B_n(F)$.  We consider the possibility of defining a linear map from $B_n(F)$
to the co-Lie algebra of  the category of mixed Tate motives defined by Bloch and
Kriz,  in terms of motivic polylogarithms.  We give results which support this possibility  
assuming part of the conjecture
by Beilinson and Soul\'{e}  on vanishing of $K$-groups of fields.
\end{abstract}

\vskip 0.3cm



\setcounter{tocdepth}{3}


\setcounter{tocdepth}{1}

\section{Introduction}
Let $F$ be a field.  Gocharov defined in \cite{Gon1}  and  \cite{Gon2}  certain groups  
$B_n(F)$ for $n\geq 2$,  which are functorial in $F$. Let $\Q[F^\times]$
be the vector space  freely generated by non-zero elements of $F$.  He defines certain subspaces $R_n(F)$  of the vector space $\Q[F^\times]$  inductively
on $n$,  and $B_n(F)$ is defined to be the quotient $\displaystyle 
\Q[F^\times]/R_n(F)$. We will recall the definition of $R_n(F)$ later. 
 For an element $x\in F^\times$ we denote by $\{x\}_n\in B_n(F)$ the
element represented by $x$.  There exist  linear maps  $\delta_n$
 which are described as follows.  When
$n=2$,  $\delta_2:\,\,B_2(F)\to \wedge^2F_\Q^\times$ is a map
such that
\[
\delta_2(\{x\}_2)=\left\{\begin{array}{cl}
 -(1-x)\wedge x,& x\neq 1\\
 0,&x=1.
 \end{array}
 \right.
 \]
 For $n\geq 3$  we have

\hfil$\delta_n:\,\,B_n(F)\to  B_{n-1}(F)\otimes F_\Q^\times,\quad \{x\}_n\mapsto \{x\}_{n-1}
\otimes x.$\hfil

\noindent Goncharov states a conjecture part of   which is described as follows.

\begin{conjecture}[part of Conjecture 2.1 of  \cite{Gon1}]
\label{Gon}
Let $F$ be a field.  There is an isomorphism
\[\Ker (\delta_n)\simeq K^{(n)}_{2n-1}(F)_\Q\]
where $K^{(n)}_{2n-1}(F)_\Q$ is the $n$-th graded quotient of the $\ga$-filtration 
on the $K$-group $K_{2n-1}(F)_\Q.$
\end{conjecture}
We explain the relation of  Conjecture \ref{Gon} to the theory of mixed Tate motives.
The triangulated category of  motives over $F$  which we denote by $\text{DM}(F)$,  
is defined independently by Hanamura \cite{Ha}, Levine \cite{Lev2} and Voevodsky \cite{Vo}.
We recall the following conjecture. 
\begin{conjecture}[Beilinson-Soul\'{e}]
\label{BS}
Let $F$ be a field.  Then we have
\[K^{(q)}_{2q-p}(F)\simeq {\rm CH}^q({\rm Spec}\,\,F, 2q-p)=0\]
for $q>0$ and $p\leq 0.$
\end{conjecture}
By Borel's results (\cite{Bo1}, \cite{Bo2}) Conjecture \ref{BS} holds when $F$ is a number field.
Assuming Conjecture \ref{BS} Levine \cite{Lev0}  defines a $t$-structure on the subcategory
of $\text{DM}(F)$ generated by Tate objects $\Q(n)$ for $n\in \ZZ$,  whose core is by definition
the category of mixed Tate motives which we denote by $\text{MT}(F)$. The category
$\text{MT}(F)$  is a neutral Tannakian category over $\Q$. The co-Lie
algebra of $\text{MT}(F)$  which we denote by $\cC(F)$, has a grading.  
There  is a direct sum decomposition  

\hfil$\cC(F)=\bigoplus_n  \cC_n(F)$.\hfil

\noindent When $F$ is a number field,
by the construction of Deligne and Goncharov in \cite{DG}  one has motivic
polylogarithm $L(x)$ for each $x\in F^\times$,  which is an  ind-object of $\text{MT}(F)$.  An explicit
description of $L(x)$ is given  in \cite{DF}.  For a positive integer $n$,  The Hodge
realization of $W_{2n} L(x)$  has a period matrix which is described in e.g. (29)  of
\cite{Du}.  The object $W_{2n} L(x)$  defines an element ${\rm Li}^\cC_n(x)$
in $\cC_n(F)$.  As is explained in Section 3.2 of \cite{Du}  there exists a well defined
linear map

\hfil$ \iota_n:\,\,B_n(F)\to \cC_n(F),\quad \{x\}_n\mapsto {\rm Li}^\cC_n(x)$\hfil

\noindent which is compatible with the map $\delta_n$ and the co-product $\delta$
of $\cC(F)$. The map $\iota_n$   induces a map

\hfil$\psi_n:\quad \Ker \delta_n\to K_{2n-1}(F)_\Q$.\hfil

\noindent Surjectivity of $\psi_n$  implies Zagier's conjecture on special
values of the Dedekind zeta function of $F$.  However if we do not assume Conjecture
\ref{BS}  the category  $\text{MT}(F)$  is no longer available.

On the other hand  Bloch and Kriz defines in \cite{BK} a category of mixed Tate
motives over any field $F$  without depending on Conjecture \ref{BS}. They define
a certain differential graded algebra  $\cN$  from the complex of algebraic cycles
on affine spaces over $F$. The bar complex of $\cN$,  which we denote by $B(\cN)$,
has a product and a coproduct.  The cohomology of dimension zero  $H^0(B(\cN))$
is a commutative Hopf algebra with Adams grading.  We denote this Hopf algebra 
by $\chi_F$.  The category of mixed Tate motives due to Bloch and Kriz 
is defined to be that of finite dimensional graded co-modules over $\chi_F$. 
We denote this category  by $\text{MT}_{\text BK}(F)$. 
Let $\chi_F^+$ be the augmentation ideal of $\chi_F$.  The co-Lie algebra
of $\text{MT}_{\text BK}(F)$  is equal to $\chi_F^+/(\chi_F^+)^2$. In \cite{BK} a motivic
polylogarithm object $\bL_n(x)$ of $\text{MT}_{\text BK}(F)$
 is defined for each $x$ in $F^\times$ and for each $n\geq 2$.
Each object  $\bL_n(x)$ defines an  element of $\chi_F^+/(\chi_F^+)^2$.  We consider 
the following question:  {\it  Is there a well defined linear map from $B_n(F)$ 
to $\chi_F^+/(\chi_F^+)^2$ which sends $\{x\}_n$ to $\bL_n(x)$?}  In this paper
we give a  result which partially answers this question  in positive direction.  
For each $n\geq 2$  we define a subspace of $R_n(F)$  inductively on $n$,
which we denote by $R_n'(F).$  The group $B_n'(F)$ is defined to be the quotient
$\Q[F^\times]/R'_n(F)$.  An element $x\in F^\times$ regarded as an element of $\Q[F^\times]$
is denoted by $\{x\}$. The element of $B_n'(F)$ represented by $x$ is denoted by
$\{x\}_n'$. 
 The linear map $\delta'_n:\,\Q[F^\times]
\to B_{n-1}'\otimes F_\Q^\times$  is defined to be the map which
sends  $\{x\}$ to $\{x\}'_{n-1}\otimes x$. Unlike the case of the groups $B_n(F)$,  we do not know
  whether the subspace $R_n'(F)$ is contained in $\Ker \delta'_n$. (see Remark \ref{remdelta}).
We explain  our results.  Let $X=\sum_i a_i\{x_i\}$ be an element of $\Q[F^\times].$
Suppose that $X$ is contained in $\Ker \delta_n'$. 
In the assertion (1) of Theorem \ref{existc}   together with the assertion   (1) of Theorem  
 \ref{poly}  we show that there exists a  cocycle $\cP_n(X)$  of $\cN$  which defines an element of 
$CH^n(\text{Spec} F, 2n-1)=K^{(n)}_{2n-1}(F)_\Q$,  such that the image of 
$\cP_n(X)$ in $\chi_F^+/(\chi_F^+)^2$  is equal to $\sum_ia_i\bL_n(x_i)$.  In this case the 
image of $X$ in $B_n(F)$ is contained in $\Ker \delta_n$, so our result  suggests that 
the element of $CH^n(\text{Spec} F, 2n-1)$ defined by $\cP_n(X)$  is equal to
$\psi_n(X)$.  The assertion (2) of Theorem \ref{existc} means that if $X$ in contained
in $R_n'(F)$,  then there exists  a cocycle $\cP_n(X)$ of $\cN$
whose image in  $\chi_F^+/(\chi_F^+)^2$ is equal to  $\sum_ia_i\bL_n(x_i)$,  and moreover
$\cP_n(X)$ is a coboundary. This implies  that the element
 $\sum_ia_i\bL_n(x_i)$ in $\chi_F^+/(\chi_F^+)^2$  is zero.

Unfortunately  we could not completely remove the dependence on Conjecture \ref{BS}
from our argument.
We  need to assume the following conjecture,  which is the special case 
 of Conjecture \ref{BS}
where $p=0$. 
\begin{conjecture}
\label{BS0}
Let $F$ be a field.  Then we have
\[K^{(q)}_{2q}(F)\simeq {\rm CH}^q({\rm Spec}\,\,F, 2q)=0.\]
for $q>0$.
\end{conjecture}

This paper is organized as follows.  In Section 2 we recall the definition of Goncharov's groups
$B_n(F)$,  and we define the groups $B'_n(F)$.  Then we recall the definition of the algebra
$\cN$  from \cite{BK},  and state and prove our first main result Theorem
\ref{existc}. Crucial ingredients are certain cycles $\rho_k(x)$  in $\cN$  which are defined
in \cite{BK}.  They are also important for the definition of motivic polylogarithms 
$\bL_n(x)$  in  $\text{MT}_{\text BK}(F)$.  In Section 3 we recall the definition
of the bar complex $B(\cN)$,  the category $\text{MT}_{\text BK}(F)$  and motivic
polylogarithms $\bL_n(x)$.  Then we state and prove our second main result  
Theorem \ref{poly}.

\section{The groups $B_n'(F).$}
Let $F$ be a field, and let $n$ be an integer greater than 1. We recall the definition
of the groups $B_n(F)$  from \cite{Gon1}.  We regard $\Q[F^\times]$ as the quotient
 of $\Q[\P^1(F)]$ by the relation $\{\infty\}=\{0\}=0.$  Consider the map
 \[\widetilde{\delta}_2:\,\,\Q[F^\times]\to \wedge^2 F_\Q^\times,\,\,\{x\}\mapsto -(1-x)\wedge x,\,\,\{1\}\mapsto 0.\]
For $n\geq 3$,  suppose that the groups  $B_{n-1}(K)$  are defined for any field $K$,
and consider the map
\[\widetilde{\delta}_n:\,\,\Q[F^\times]\to B_{n-1}(K)\otimes K^\times_\Q,\,\,
\{x\}\mapsto \{x\}_{n-1}\otimes x.\]  
The subspace of $\Q[F^\times]$ generated by elements $\xi(1)-\xi(0)$  for
$\xi\in \Ker \widetilde{\delta}_n(F(t))$  is denoted by $R_n(F)$.  The group
$B_n(F)$ is by definition the quotient $\Q[F^\times]/R_n(F).$  By {\rm Lemma  1.16} of \cite{Gon2} 
the subspace $R_n(F)$ is contained in $\Ker \widetilde{\delta}_n$,  and so there is a well defined
map 

\hfil$\delta_n:\,\,B_n(F)\to  B_{n-1}(K)\otimes K^\times_\Q,\,\,
\{x\}_n\mapsto \{x\}_{n-1}\otimes x.$\hfil

\noindent We give the definition of the groups $B'_n(F)$.  
 Let  $\delta_2'$ be the map equal to $\widetilde{\delta}_2$.  
For $n\geq 3$, suppose we have defined  the vector spaces $B'_{n-1}(K)$  for each field $K$.
Let $\delta'_n$ be the morphism  from $\Q[F^\times]\to B'_{n-1}(F)\otimes F_\Q^\times$  defined by 
\[\sum_ia_i\{x_i\}\mapsto \sum_i a_i\{x_i\}'_{n-1}\otimes x_i.\]
For an element $u$ of $F(t)^\times$ consider the following condition.
\[\{u(0), u(1)\}\cap \{0,\infty\}=\emptyset.\]
We denote this condition by \C0. 
The subspace $R'_n(F)$  of $\Q[F^\times]$ is defined to be the one generated by  $\al(1)-\al(0)$
where 
\[
\begin{aligned}
\al=&\sum_ja_j\{f_j(t)\}\in \Q[F(t)^\times],\, \al\in \Ker \delta_n'(F(t)) \text{\it and  for each }j\\ 
 &\text{\it the functions } \,f_j(t) \text{ and } 1-f_j(t) \, \text{\it  satisfy \C0}.
 \end{aligned}
 \]
 We define $B'_n(F)$ as the quotient $\Q[F^\times]/R'_n(F)$. 
\begin{remark} 
\label{remdelta}
It follows from the definition that $\Ker \delta_n'\subset \Ker \widetilde{\delta}_n$
since we can show inductively that $R'_n(F)$ is a  subspace of $R_n(F)$.
 Unlike the morphisms $\widetilde{\delta}_n$,   it is not clear  whether the subspace
$R'_n(F)$ is contained in $\Ker \delta'_n$.  Naive analogue of the proof for the case of
$B_n(F)$  i.e. {\rm Lemma  1.16} of \cite{Gon2}  does not seem to work.
\end{remark}

We recall the definition of the differential graded algebra $\cN$  from \cite{BK}.  
For a field $F$ set $\s^n=\s^n_F=(\bold P^1_F -\{1\})^n$, which is isomorphic to affine $n$-space 
as a variety, and set $(u_1, \cdots, u_n)$ to be the coordinates of $\s^n$.
For a quasi-projective variety $X$ over $F$, the faces of $X\times \s^n$ are subvarieties
defined by setting several coordinates $u_i$ to be zero or $\infty.$  We denote by 
$Z^p(X,n)$ the $\Q$-vector space generated by irreducible closed subsets of
$X\times \s^n$ of codimension $p$ which intersect all faces properly.  The linear map 

\hfil$\part_0^i:\quad Z^p(X,n)\to Z^p(X,n-1),\quad Z\mapsto Z\cdot(u_i=0)$\hfil

\noindent  resp.

\hfil$\part_\infty^i:\quad Z^p(X,n)\to Z^p(X,n-1),\quad Z\mapsto Z\cdot (u_i=\infty)$\hfil

is used to define the differential
\begin{equation}
\label{differential}
d=\sum_{i=1}^n(-1)^{i-1}(\part_0^i-\part_\infty^i):\quad  Z^p(X,n)\to Z^p(X,n-1).
\end{equation}
The group $G_n= \{\pm 1\}^n\rtimes S_n$\index{$G_n$} acts naturally on  $\s^n$ as follows. The subgroup
$\{\pm 1\}^n$ acts by the inversion of the coordinates $u_i$, and  the symmetric group $S_n$ acts by
permutation of $u_i$'s. This action induces an action of  $G_n$ on $Z^r(X, n)$.
Let $\sign: G_n \to \{\pm 1\}$ be the character which sends 
$(\eps_1, \cdots, \eps_n; \sigma)$ to $\eps_1\cdot \cdots\cdot\eps_n\cdot\sign(\sigma)$. 
We denote by $\Alt$ the idempotent 

\hfil$({1}/{|G_n|})\sum_{g\in G_n} \sign(g) g$     \index{$\Alt$}\hfil

\noindent of $\Q[G_n]$.  For a $\Q[G_n]$-module M, 
the submodule of $M$

\hfil$\{\al \in M\mid \Alt \al=\al\}=\Alt(M)$\hfil

\noindent is denoted by $M^{\alt}.$ By Lemma 4.3 \cite{BK}  the projector $\Alt$ and the differential
$d$ in (\ref{differential}) commute, and we can define a complex  $N^\bullet (X)(p)$
by setting $N^j (X)(p)=Z^p(X, 2p-j)^{\alt}$ and the differential

\hfil$d:\quad N^j (X)(p)\to N^{j+1}(X)(p)$\hfil

\noindent is the map $d$ defined in (\ref{differential}).  By Theorem 4.11 \cite{Lev}
we have the following.

\begin{proposition}
\label{alteq1}
Let $X$ be a smooth quasi-projective variety over $F$.  Then there exists an isomorphism
\[H^j(N^\bullet(X)(p))\simeq CH^p(X, 2p-j).\]
\end{proposition}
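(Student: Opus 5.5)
The proposition identifies the cohomology of the alternating cubical complex with higher Chow groups; the plan is to compare $N^\bullet(X)(p)$ with Bloch's cubical cycle complex and then use the known comparison of the latter with the simplicial one. Set $\mathcal Z^p(X,\bullet)$ to be the normalized cubical complex $Z^p(X,n)/Z^p(X,n)_{\rm degn}$ with differential $d$ as in (\ref{differential}). Its homology in degree $n$ is $CH^p(X,n)$. For $X$ smooth quasi-projective this uses Bloch's moving lemma, which makes cubical and simplicial higher Chow groups agree; I will invoke it as a known input (it is exactly what Theorem 4.11 of \cite{Lev} packages). With this reindexing, $N^j(X)(p)=Z^p(X,2p-j)^{\alt}$ sits in homological degree $2p-j$. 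The claim therefore reduces to showing that the composite
\[
Z^p(X,\bullet)^{\alt}\injto Z^p(X,\bullet)\to \mathcal Z^p(X,\bullet)
\]
is a quasi-isomorphism.

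\textbf{Step 1 (subcomplex).} By Lemma 4.3 of \cite{BK}, $\Alt$ commutes with $d$. Hence $Z^p(X,\bullet)^{\alt}=\Alt(Z^p(X,\bullet))$ is a subcomplex and a direct summand of $Z^p(X,\bullet)$, split by the chain map $\Alt$.

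\textbf{Step 2 (degenerate cycles die).} A degenerate cycle is a pullback along a projection forgetting some $u_i$. Such a cycle is fixed by the inversion $u_i\mapsto u_i^{-1}$, which has sign $-1$. Therefore $\Alt$ kills degenerate cycles, and $\Alt$ factors through the normalized complex $\mathcal Z^p(X,\bullet)$.

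\textbf{Step 3 (main obstacle: $\Alt$ induces the identity on homology).} I need to show that every $g\in G_n$ acts on $H_n(\mathcal Z^p(X,\bullet))$ by $\sign(g)$. Granting this, $\Alt$ acts as the identity on homology. Combined with the splitting in Step 1, this gives the quasi-isomorphism: the homology of $\Alt(\mathcal Z)$ equals that of $\mathcal Z$, and $\Alt(\mathcal Z)\cong Z^{\alt}$.

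For the inversion $u_1\mapsto u_1^{-1}$, I would build a chain homotopy from the cycle $\{(t,u_1,u):\ u_1 = \text{a fixed function of } t\}$ on $\s^1\times\s^1$. Equivalently, I would use the standard fact that $\s^1$ carries the "cycle $u_1 u_1'=1$" relation modulo degenerate cycles, realized by pulling back along the multiplication map $\s^1\times\s^1\dashrightarrow\s^1$. A transposition $u_1\leftrightarrow u_2$ is handled similarly, via the homotopy given by the cycle parametrizing the "rotation" in the $(u_1,u_2)$-square.

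The technical difficulty is proper intersection with faces. I would handle it by restricting to the subcomplex of cycles in good position with respect to the homotopy cycles and applying the moving lemma again, which is where smoothness and quasi-projectivity of $X$ are used. Having established the sign action, Steps 1 and 2 finish the argument, giving $H^j(N^\bullet(X)(p))\simeq H_{2p-j}(\mathcal Z^p(X,\bullet))\simeq CH^p(X,2p-j)$.
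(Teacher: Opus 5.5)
Your Steps 1 and 2 are correct. $\Alt$ is an idempotent chain map that kills degenerate cycles, so $Z^p(X,\bullet)^{\alt}$ maps isomorphically onto the summand $\Alt(\mathcal Z)$ of the normalized cubical complex. Everything therefore reduces to showing that $\Alt$ induces the identity on $H_*(\mathcal Z)$. The paper gives no argument; it quotes Theorem 4.11 of \cite{Lev} for the whole statement, i.e.\ for exactly this alternating comparison. So if you cite that theorem, Step 3 is superfluous, and if you do not, Step 3 is the entire content.

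As written, Step 3 is not correct. A single $g\in G_n$ is not a chain map, so ``$g$ acts on $H_n(\mathcal Z)$ by $\sign(g)$'' has no meaning. Indeed $\part^1_0\tau_1=\part^1_\infty$, $\part^1_\infty\tau_1=\part^1_0$ and $\part^i_\epsilon\tau_1=\tau_1\part^i_\epsilon$ for $i\ge 2$, so for a cycle $z$ you get
\[
d(\tau_1 z)=-A-\tau_1A,\qquad A=(\part^1_0-\part^1_\infty)z .
\]
This is nonzero in general, already for $p=1$, $n=2$, and transpositions behave the same way; only the average $\Alt$ commutes with $d$. You need either an explicit chain homotopy between the identity and $\Alt$ on $\mathcal Z$, or a preliminary normalization showing that every class is represented by a cycle all of whose faces vanish. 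For such $z$, each $gz$ is again a cycle of the same kind, and only then can $gz$ be compared with $\sign(g)z$.

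The homotopies you sketch also fail, and no moving lemma can repair them. The product $(u,u')\mapsto uu'$ is not a morphism $\s^2\to\s^1$. For a point $z=(a_1,\dots,a_n)$ with $a_1\neq 0,\infty$, the pull-back is the curve $uu'=a_1$ times a point, which passes through the codimension-two face $\{u=0,\ u'=\infty\}$. The same excess intersection occurs for every cycle not degenerate in $u_1$. Likewise, no morphism $\s^1\times\s^2\to\s^2$ from the identity to a rotation can send the faces $\s^1\times\{u_i=0\}$, $\s^1\times\{u_i=\infty\}$ into the boundary. Each such face is irreducible, so its image would have to stay inside a single edge, whereas the rotation moves each edge to a different one.

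What works is the connection. In the coordinate $x=u/(u-1)$ one has $\s\cong\A^1$ with faces $x=0,1$, and $u\mapsto u^{-1}$ becomes $x\mapsto 1-x$. Take $\phi(x,y)=x(1-y)$ applied to the first coordinate, and $\psi(t,x,y)=(x,ty)$ applied to the first two. Both are flat and map every face onto a face with equidimensional fibres, so they preserve admissibility without any moving. For a cycle $z$ with all faces zero one computes $d(\phi^*z)=-(z+\tau_1 z)$ and $d(\psi^*z)=-(z+\sigma z)$, where $\sigma$ swaps $u_1,u_2$. Together with the normalization above, this gives $\Alt z\equiv z$ modulo boundaries and closes Step 3.
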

Let $s$ be a finite set of closed subvarieties of $X$.  We denote by $Z^p_s(X,n)$ the subspace
of $Z^p(X,n)$ generated by irreducible closed subsets of $X\times \s^n$ which intersect 
all the faces of $S\times\s^n$ properly  including $S\times\s^n$, for each $S\in s$.
$X$ is always contained in $s$. The subspace
$Z^p_s(X,n)^{\alt}$ of $Z^p(X,n)^{\alt}=N^p(X)(p)$ is denoted by $N^p_s(X)(p)$.
By Theorem 3.1 and the proof of
Theorem 4.11 of \cite{Lev} we have the following.
\begin{proposition}
\label{alteq2}
The inclusion  $N^p_s(X)(p)\to N^p(X)(p)$  is a quasi-isomorphism.
\end{proposition}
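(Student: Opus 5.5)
The plan is to reduce Proposition \ref{alteq2} to the corresponding statement for the full cycle complexes, which is Levine's moving lemma (Theorem 3.1 of \cite{Lev}), and then to pass to alternating parts using the projector $\Alt$.

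Consider first the cubical complexes without alternation. In degree $j$ these are $Z^p_s(X,2p-j)$ and $Z^p(X,2p-j)$, with the differential $d$ of (\ref{differential}). Let $\mathcal{Z}_s(X)(p)$ and $\mathcal{Z}(X)(p)$ denote these complexes modulo degenerate cycles, i.e.\ the normalized cubical complexes. Levine's Theorem 3.1 says that, for $X$ smooth quasi-projective and $s$ a finite set of closed subvarieties, the inclusion $\mathcal{Z}_s(X)(p)\to \mathcal{Z}(X)(p)$ is a quasi-isomorphism. This is the cubical form of Bloch's moving lemma. I would quote this directly rather than reprove it.

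The next step is to compare these complexes with their alternating parts. By Lemma 4.3 of \cite{BK}, $\Alt$ commutes with $d$. The same computation applies to $Z^p_s$, because the action of $G_n$ preserves the faces of $S\times\s^n$: inversions swap $0$ and $\infty$, and permutations permute coordinates. Hence the subspace $Z^p_s(X,n)$ is $G_n$-stable, and $\Alt$ gives an idempotent chain endomorphism of both complexes. These endomorphisms are compatible with the inclusion.

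So the inclusion $N^\bullet_s(X)(p)\to N^\bullet(X)(p)$ is a direct summand of the inclusion of full complexes $Z^p_s(X,2p-\bullet)\to Z^p(X,2p-\bullet)$. A retract of a quasi-isomorphism is a quasi-isomorphism, because cohomology commutes with the image of an idempotent over $\Q$.

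What remains, and what I expect to be the main obstacle, is the gap between the complexes in Levine's statement and the ones appearing in the retract argument. Levine's theorem is for normalized complexes, while the retract argument uses the unnormalized complexes $Z^p(X,\bullet)$. I would bridge this using the proof of Theorem 4.11 of \cite{Lev}. That proof shows the composite of the natural maps
\[
N^\bullet(X)(p)\hookrightarrow Z^p(X,2p-\bullet)\to \mathcal{Z}(X)(p)
\]
is a quasi-isomorphism, by showing that degenerate cycles are killed by $\Alt$ (up to homotopy) and that $\Alt$ is homotopic to the identity on the normalized complex. Running the same argument verbatim with the subscript $s$ gives a quasi-isomorphism $N^\bullet_s(X)(p)\to \mathcal{Z}_s(X)(p)$. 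Checking that this goes through verbatim requires two things: degenerate cycles pulled back from $Z^p_s$ stay in $Z^p_s$, and the homotopies used are built from face, degeneracy and $G_n$-maps, which preserve the $s$-condition.

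With this in hand, the result follows by two-out-of-three. The square formed by the two inclusions and the two quasi-isomorphisms to the normalized complexes commutes. Three of its maps are quasi-isomorphisms, so the fourth, the inclusion $N^\bullet_s(X)(p)\to N^\bullet(X)(p)$, is as well.
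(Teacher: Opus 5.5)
Your final argument is the paper's: the paper's proof is just the citation of Theorem 3.1 of \cite{Lev} (the cubical moving lemma) together with the proof of Theorem 4.11 of \cite{Lev} rerun under the support condition $s$, which is exactly your two-out-of-three square; the retract paragraph on the unnormalized complexes is not needed for it. Two small points. First, the homotopy showing that $\Alt$ acts as the identity on the cohomology of the normalized complex cannot be built from faces, degeneracies and $G_n$ alone; it also uses the multiplication map $\square^2\to\square$ of Levine's extended cubical structure, which is flat and maps each face of $\square^2$ onto $\square$ or to $0$ or $\infty$, so it too preserves the $s$-condition. Second, your retract idea works as stated once you apply it to the \emph{normalized} complexes: $\Alt$ kills degenerate cycles exactly, since a pullback from $\square^{n-1}$ is fixed by an inversion of sign $-1$, so $N^\bullet_s(X)(p)\to N^\bullet(X)(p)$ is the restriction of the quasi-isomorphism $\mathcal{Z}_s(X)(p)\to\mathcal{Z}(X)(p)$ to the images of a compatible chain idempotent, and Theorem 3.1 alone already suffices.
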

We recall the definition of a certain differential graded algebra (DGA) from \cite{BK}.
We denote by $N^\bullet(p)$ the complex $N^\bullet(\text{Spec } F)(p)$. There exists
a product  $N^i(p)\otimes N^j(q)\to N^{i+j}(p+q)$ defined by

\hfil$z\otimes w\mapsto z\cdot w:=\Alt(z\times w)=(-1)^{ij}w\cdot z.$\hfil

\noindent By this product the complex $\displaystyle \bigoplus_{p\geq 0} N^\bullet(p)$  is a
graded commutative DGA. Leibniz rule

\hfil$ d(z\cdot w)=dz\cdot w+(-1)^{\deg z}z\cdot dw$\hfil

\noindent is satisfied.  We denote this DGA by $\cN$.  We recall  the definition of the elements
of $\cN$ defined in \cite{BK} Section 9. For $k\geq 1$ and an element $a\in F^\times$
the element $\rho_k(a)\in N^1(k)$ is  defined as the locus

\hfil$(u_1,\cdots, u_{k-1},1-u_1,1-u_2/u_1,\cdots, 1-u_{k-1}/u_{k-2}, 1-a/u_{k-1})$\hfil

\noindent up to sign. (We omit the symbol ${}^{\alt}$ here and in the following.) The sign is defined as follows.
 We set 
\[
\rho_1(a)=\left\{ \begin{array}{cc}
(1-a)&  a\neq 1\\
0& a=1
\end{array}
\right.\]
and we define the signs of $\rho_k(a)\,\,(k\geq 2)$  so that  the equalities
\[d\rho_k(a)=\rho_{k-1}(a)\cdot (a)\]
hold.

\noindent For elements $a,\,b\in F^\times $, consider the cochain 
$u(a,b)$ in $N^0(1)=Z^1(F,2)^{\alt}$ which is parametrically described 
as the locus of $(x,\,\frac{x-a}{x-ab^{-1}}).$ This is  defined  in pp. 182 of \cite{To}.    One sees that 
\[du(a,b)=(b)+(ab^{-1})-(a)\]
and when $b=1$ the cochain $u(a,1)$ is zero. 
Let $\{z_j\}\,(j\in J)$ be a subset of $F^\times$ which forms  a basis of the vector space $F^\times_\Q$. For an element $x$ of $F^\times$  suppose that $(x)=\sum_j c_j (z_j)$ in 
$F^\times_\Q.$  Let $x[z]$ be the cocycle $\sum_j c_j(z_j)\in N^1(1).$  
We have an element $r(x,z)\in N^0(1)$ which is a linear
combination of  elements of the form $u(a,b)$, such that
$dr(x, z)=(x)-x[z].$  Let $\widetilde{P}_2(x)\in N^1(2)$ be the element 

\[\rho_2(x)+\rho_1(x)\cdot r(x,z)-r(\rho_1(x),z)\cdot x[z]\]
and for $n\geq 3$ let $P_n(x)\in N^1(n)$ be the element
\[\sum_{k=0}^{n-1} \frac1{k!}\rho_{n-k}(x)r(x,z)^k.\]
We have  equalities
\begin{equation}
\label{dp2}
d\widetilde{P}_2(x,z)=\rho_1(x)[z]\cdot x[z]
\end{equation}
and for $n\geq 3$
\begin{equation}
\label{dpn}
dP_n(x,z)=\left\{
\begin{aligned}
P_{n-1}(x)\cdot x[z] \quad&  n>3\\
\widetilde{P}_2(x,z)\cdot x[z]\quad & n=3.
\end{aligned}
\right.
\end{equation}

\begin{theorem}
\label{existc}  We assume {\rm  Conjecture \ref{BS0}.}
\begin{itemize}
	\item[(1)] Let $X=\sum_ia_i \{x_i\}$ be an element of $\Q[F^\times].$ Suppose that $X$ is in 
	$\Ker \delta_n'$. Then for an integer $k$ such that $1\leq k\leq n-2$ and for each $k$-tuple 
	$(j_1,\cdots, j_k)$ of elements in $J$, there exists a cochain $D^{n-k}_{j_1\cdots j_k}\in N^0(n-k)$
	which has the following properties {\rm (I)}, {\rm (II)} and {\rm (III)}. 
		\begin{itemize}
		\item[(I)]  There are equalities
		\[dD^{n-k}_{j_1\cdots j_k}=\left\{
		\begin{aligned}
		&\sum_i a_ic_{ij_1\cdots j_k}P_{n-k}(x_i)-\sum_{j_{k+1}}D^{n-k-1}_{j_1\cdots j_{k+1}}\cdot (z_{j_{k+1}})
		&1\leq k\leq n-3 \\
		&\sum_i a_ic_{ij_1\cdots j_{n-2}}\widetilde{P}_2(x_i) & k=n-2
		\end{aligned}
		\right.
		\]
Here  we set  $(x_i)=\sum_{j}c_{ij}(z_j)$ in $F^\times_\Q$ 
for each $i$, and $c_{ij_1\cdots j_k}$ denotes $\displaystyle \prod_{t=1}^k c_{ij_t}.$ 

		\item[(II)] $D^{n-k}_{j_1\cdots j_k}=0$   if  there exists a $t,\,\,1\leq t\leq k$
		such that $c_{ij_t}=0$  for all $i$. If $k\geq 2$  the cochain  $D^{n-k}_{j_1\cdots j_k}$
		depends on the set $\{j_1,\cdots, j_k\}$,  but is independent of
		the ordering
		$j_1,\cdots, j_k.$
		\item[(III)]  The cochain 
		$$\cP_n(X):=\left\{\begin{aligned}
		&\sum_i a_i P_n(x_i)-\sum_{j_1} D^{n-1}_{j_1} \cdot (z_{j_1})
		&n\geq 3 \\
		&\sum_i a_i\widetilde{P}_2(x_i) & n=2
		\end{aligned}
		\right.
		$$ in $N^1(n)$ 
		is a cocycle.
		\end{itemize}
	\item[(2)]  If $X=\sum_ia_i \{x_i\}$ is in the subspace $R'_n(F)$ of $\Q[F^\times]$,  then there exist elements 
	$D^{n-k}_{j_1\cdots j_k}\in N^0(n-k)$  as in $(1)$  which have the properties $({\rm I}), 
	({\rm II}) \text{ and }
	 ({\rm III})$ as above,  and moreover the cochain $\cP_n(X)$ defined in $({\rm III})$ above is   a  coboundary.
\end{itemize}
\end{theorem}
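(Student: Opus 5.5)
Both parts go by induction on $n$, and in each step we construct the cochains $D$ by descending induction on $k$, starting from $k=n-2$. The only cohomological input is Conjecture \ref{BS0} combined with Proposition \ref{alteq1}: for every $q>0$ we get $H^0(N^\bullet(q))=CH^q(\Spec F,2q)=0$. Consequently every cocycle in $N^0(q)$ is a coboundary, and any cocycle we must kill in $N^0$ can be written as $d$ of something.

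For part (1), expand $\delta'_n(X)=0$ in the basis $\{z_j\}$. This gives $\sum_i a_i c_{ij}\{x_i\}'_{n-1}=0$ in $B'_{n-1}(F)$ for every $j$. By the definition of $R'_{n-1}$, each such element lies in $R'_{n-1}(F)$. Iterating, for each tuple $(j_1,\dots,j_k)$ the element $X_{j_1\cdots j_k}=\sum_i a_i c_{ij_1\cdots j_k}\{x_i\}$ lies in $R'_{n-k}(F)$. We then apply part (2) for $n-k<n$, which is available by induction.

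At $k=n-2$ this gives a cochain with $dD^2_{j_1\cdots j_{n-2}}=\sum_i a_ic_{ij_1\cdots j_{n-2}}\widetilde P_2(x_i)$. For general $k$, part (2) applied to $X_{j_1\cdots j_k}$ writes $\cP_{n-k}(X_{j_1\cdots j_k})$ as a coboundary $dD^{n-k}_{j_1\cdots j_k}$. To make the $D^{n-k-1}$ appearing inside $\cP_{n-k}(X_{j_1\cdots j_k})$ coincide with the ones chosen at level $k+1$, we check that the $D$'s produced by (2) for $X_{j_1\cdots j_k}$ can be taken to be the $D^{n-k-1}_{j_1\cdots j_kj_{k+1}}$ themselves. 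Since both sets satisfy (I), their differences are cocycles in $N^0$ of positive weight, hence coboundaries, and they can be absorbed.

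Property (II) is arranged by setting $D=0$ whenever some $c_{ij_t}$ vanishes for all $i$. Independence of ordering is arranged by choosing the cochain only for one ordering of each multiset $\{j_1,\dots,j_k\}$ and reusing it for the others. This is legitimate because the right-hand side of (I) is symmetric, as $c_{ij_1\cdots j_k}$ is symmetric. Finally, (III) is a direct computation using (\ref{dp2}), (\ref{dpn}), (I) for $k=1$, and $P_{n-1}(x_i)\cdot x_i[z]=\sum_j c_{ij}P_{n-1}(x_i)\cdot(z_j)$:
\[
d\cP_n(X)=\sum_i a_i P_{n-1}(x_i)\cdot x_i[z]-\sum_{j_1}\Big(\sum_i a_ic_{ij_1}P_{n-1}(x_i)-\sum_{j_2}D^{n-2}_{j_1j_2}(z_{j_2})\Big)(z_{j_1}).
\]
The first two terms cancel. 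What remains is $\sum_{j_1,j_2}D^{n-2}_{j_1j_2}(z_{j_2})(z_{j_1})$, which vanishes by the symmetry in (II) together with the graded anticommutativity $(z_{j_2})(z_{j_1})=-(z_{j_1})(z_{j_2})$.

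For part (2) it suffices, by linearity, to treat a single generator $X=\al(1)-\al(0)$ with $\al=\sum_j a_j\{f_j(t)\}\in\Ker\delta'_n(F(t))$ satisfying \C0. Apply part (1) over $\A^1_F$ rather than over $F$. That is, build $\cP_n(\al)$ as a cycle on $\A^1\times\s^n$ using $\rho_k(f_j(t))$ and $r(f_j,z)$, where the basis $z$ is now taken in $F(t)^\times_\Q$. The cochains $D$ are produced as above, using vanishing of the relevant $H^0$ for $\A^1$ (homotopy invariance reduces it to $F$) and Proposition \ref{alteq2} with $s=\{\A^1,\{0\},\{1\}\}$, so that everything meets the fibres $t=0,1$ properly. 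Condition \C0 on $f_j$ and $1-f_j$ guarantees that the $\rho_k(f_j)$ specialise well at $t=0,1$. Restricting to $t=0$ and $t=1$ then gives cocycles $\cP_n(\al(0))$ and $\cP_n(\al(1))$. The cycle $\cP_n(\al)$ pulled back along $t$ and intersected with the graph of $u=t/(t-1)$-type coordinates gives the homotopy: by the standard homotopy $\A^1$-argument, $i_1^*-i_0^*$ on a cocycle of $N^1(\A^1)$ is a coboundary on $\Spec F$. The remaining issue is that the specialised $D$'s need not be the ones attached to $X$ via (1), and the basis of $F(t)^\times$ specialises to non-basis elements. We correct this by re-expressing $z_j(0),z_j(1)$ in the basis of $F^\times_\Q$ via the cochains $r$, and by noting that any two choices of $D$'s change $\cP_n$ by a coboundary, since their differences are cocycles in $N^0$, hence coboundaries by Conjecture \ref{BS0}.

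I expect this last step to be the main obstacle: constructing the family cycle over $\A^1$ with all proper-intersection conditions, including the $u(a,b)$ pieces, and controlling the change of basis at the specialisations.
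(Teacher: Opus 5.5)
There are two genuine gaps, one in each part.

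\textbf{Part (1): the iteration step.} Your step ``Iterating, \dots\ $X_{j_1\cdots j_k}\in R'_{n-k}(F)$'' is unjustified. To pass from $X_{j_1}\in R'_{n-1}(F)$ to $X_{j_1j_2}\in R'_{n-2}(F)$ you need $\delta'_{n-1}(X_{j_1})=0$, that is, $R'_{n-1}(F)\subset\Ker\delta'_{n-1}$. Remark~\ref{remdelta} says exactly this is not known, which is why (2) is formulated as producing the whole family of $D$'s.

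You may therefore apply the induction hypothesis (2) only to the elements $X_{j_1}$. This is enough for existence: (2) for $X_{j_1}$ supplies cochains $D^{n-k}_{j_2\cdots j_k}(X_{j_1})$ satisfying (I) at every level. However, this family is symmetric only in $j_2,\dots,j_k$, so ``choose one ordering and reuse it'' cannot be applied as you state it.

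You have to symmetrize in $j_1$ from the bottom up, as the paper does:
\begin{itemize}
\item The right-hand sides of (I) are symmetric, so Conjecture~\ref{BS0} gives $E^m\in N^{-1}(m)$, symmetric in the trailing indices, with $D^m+dE^m$ fully symmetric.
\item You then subtract $\sum_j E^m_{\cdots j}\cdot(z_j)$ from $D^{m+1}$. Equation (I) stays valid with the new $D^m$.
\item The equation two levels up is unchanged, because $(z_j)\cdot(z_{j'})=-(z_{j'})\cdot(z_j)$.
\end{itemize}
Your ``differences are cocycles, hence coboundaries, and can be absorbed'' is a compressed form of this. Above the bottom level, though, the differences are cocycles only after this correction, and the final absorption into $\cP_n$ uses the symmetry of the $E$'s.

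\textbf{Part (2): the family lives only on an open set.} The family cocycle cannot live on $\A^1\times\s^{2n-1}$. The $\rho_k(f_j(t))$, the $u(a,b)$ inside the $r$'s, and the $D$'s are admissible only over an open $U\subset\A^1$ containing $0$ and $1$. On $U$ homotopy invariance is not automatic: a class in $CH^k(U,2k-1)$ extends to $\A^1$ only if its residue in $CH^{k-1}(\A^1\setminus U,2k-2)$ vanishes.

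The missing idea is Lemma~\ref{homotopy}. Conjecture~\ref{BS0}, applied to the residue fields of $\A^1\setminus U$, kills that residue group. This gives three facts: $i_0^*C-i_1^*C$ is a coboundary; $C$ is a coboundary once $i_0^*C$ is; and a cocycle on $U$ that is a coboundary over $F(t)$ is a coboundary on $U$.

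Vanishing of $H^0$ also does not pass from $\A^1$ to $U$, since $CH^q(U,2q)$ is controlled by $CH^{q-1}(k(p),2q-1)$, which lies outside Conjecture~\ref{BS0}. So the $D$'s must be built and corrected over the field $F(t)$. They are then moved into $N^0_{\{0,1\}}(U)$ using Lemma~\ref{homotopy}(2) and Proposition~\ref{alteq2}.

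\textbf{Part (2): basis choice and basis change.} The basis must be taken among elements satisfying $C_0$ that span the subspace $F_0^\times$. A basis of $F(t)^\times_\Q$ containing, say, $t$ does not specialize.

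The change of basis at $t=0,1$ is also not just a change of $D$'s. The specialized cocycle uses $(y_l(0))$ and $r(x_i(0),y(0))$ instead of $(z_j)$ and $r(x_i(0),z)$, so your ``any two choices of $D$'s differ by a coboundary'' does not cover it. The paper resolves exactly the step you flag as the obstacle, as follows:
\begin{itemize}
\item It runs a second homotopy in a variable $s$ that deforms $(y_l(0))$ into $\sum_j c_{lj}(z_j)$, using linear $z_j(s)$ and $Y_l(s)$ and the $m_l$-th power trick.
\item It normalizes $D(0,0)=i_0^*D$.
\item It applies Lemma~\ref{homotopy}(1)(b) to $\cP_n(X)(0)(s)-\cP_n(X)(1)(s)$ and sets $s=1$.
\item It uses Proposition~\ref{modifyr} to account for the resulting choice of $r$.
\end{itemize}
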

For an element 
 $x\in F^\times$,  elements $r(x,z)\in N^0(1)$ such that $dr(x,z)=(x)-x[z]$ are not unique,  but if we assume 
Conjecture \ref{BS0} they are unique modulo adding  coboundary.  We have the following. 
\begin{proposition}
\label{modifyr}  
If the assertions of {\rm Theorem \ref{existc}} hold for a choice of $r(x_i,z)$, then  they hold for other choices
of $r(x_i,z)$  if we assume {\rm Conjecture \ref{BS0}.}

\end{proposition}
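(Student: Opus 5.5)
The plan is to compare two choices $r(x_i,z)$ and $r'(x_i,z)$ directly. Both satisfy $dr=(x_i)-x_i[z]$, so the difference $r'(x_i,z)-r(x_i,z)$ is a cocycle in $N^0(1)$. By Proposition \ref{alteq1}, $H^0(N^\bullet(1))\simeq CH^1(\Spec F,2)$, and Conjecture \ref{BS0} makes this group zero. Hence we can write $r'(x_i,z)=r(x_i,z)+d\beta_i$ with $\beta_i\in N^{-1}(1)$. The same applies to the auxiliary $r(\rho_1(x_i),z)$ that enters $\widetilde P_2$: replacing it changes $\widetilde P_2$ by $d\gamma_i\cdot x_i[z]$, which equals $d(\gamma_i\cdot x_i[z])$ because $x_i[z]$ is a cocycle.

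Next I would compute the changes in $P_m(x_i)$ and $\widetilde P_2(x_i)$ explicitly. For $\widetilde P_2$, Leibniz gives
\[
\rho_1(x_i)\cdot d\beta_i=-d(\rho_1(x_i)\cdot\beta_i)+d\rho_1(x_i)\cdot\beta_i=-d(\rho_1(x_i)\beta_i),
\]
since $\rho_1$ is a cocycle of degree $1$. So $\widetilde P_2'=\widetilde P_2+d E^2_i$ with $E^2_i\in N^0(2)$. For $P_m$, I would expand $(r+d\beta)^k$ and aim, by induction on $m$, for a formula $P'_m(x_i)=P_m(x_i)+dE^m_i+E^{m-1}_i\cdot x_i[z]$ (up to sign) with $E^m_i\in N^0(m)$. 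The model is the relation $dP_m=P_{m-1}\cdot x[z]$: the primitive $E^m_i$ should be built from terms $\frac1{k!}\rho_{m-k}\cdot\beta_i\cdot(\text{powers of } r, d\beta_i)$, and the defect is exactly what $d\rho_{m-k}=\rho_{m-k-1}\cdot(x)$ produces. Here I would use $(x_i)=x_i[z]+dr$ and absorb the $dr$ part into further coboundaries. This step carries the bookkeeping burden. A cleaner route may be to note that $P_m'-P_m-dE^m_i$ has differential expressible through lower ones, then use uniqueness modulo coboundaries in $N^0$ again via Conjecture \ref{BS0}, since $H^0(N^\bullet(q))=0$. That lets me adjust $E^m_i$ by cocycle classes without computing them explicitly.

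Given the $E$'s, I would define new cochains
\[
D'^{\,n-k}_{j_1\cdots j_k}=D^{n-k}_{j_1\cdots j_k}+\sum_i a_ic_{ij_1\cdots j_k}E^{n-k}_i.
\]
Then I would check (I) using $x_i[z]=\sum_j c_{ij}(z_j)$: the term $E^{n-k-1}_i\cdot x_i[z]$ splits as $\sum_{j_{k+1}}c_{ij_{k+1}}E^{n-k-1}_i\cdot(z_{j_{k+1}})$, and this is absorbed by the correction to $D^{n-k-1}_{j_1\cdots j_{k+1}}$. Property (II) follows because the corrections carry the factor $c_{ij_1\cdots j_k}$, which is symmetric in the indices and vanishes under the stated condition. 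For (III), the new $\cP_n(X)$ differs from the old one by $d(\sum_ia_iE^n_i)$, so it remains a cocycle; in case (2) it also remains a coboundary. The main obstacle is the middle step: getting the explicit corrections $E^m_i$ with consistent signs in the expansion of $\sum_k\frac1{k!}\rho_{m-k}r^k$. If the direct computation becomes unwieldy, I would fall back on the inductive vanishing argument from $H^0(N^\bullet(q))=0$.
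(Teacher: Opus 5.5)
Your outline matches the paper's argument. Since $r'-r$ is a cocycle in $N^0(1)$ and $H^0(N^\bullet(1))\simeq \mathrm{CH}^1(\Spec F,2)=0$, you get $r'=r+d\beta$. One then shows $P'_m-P_m=dE^m+E^{m-1}\cdot x[z]$, corrects each $D^{n-k}_{j_1\cdots j_k}$ by $a_ic_{ij_1\cdots j_k}E^{n-k}_i$, and $\cP_n(X)$ changes by an exact term. Your handling of (II), of the level $\widetilde P_2$, and of $r(\rho_1(x_i),z)$ is fine. For the last one, note that the induced correction $\pm a_ic_{ij_1\cdots j_{n-2}}\gamma_i\cdot x_i[z]$ of $D^2$ does not propagate to the $D^3$ equation, because $x_i[z]\cdot x_i[z]=0$.

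However, the step you defer is the whole content of the proof, and your fallback for it would fail. Once $E^{m-1}$ is fixed, $P'_m-P_m-E^{m-1}\cdot x[z]$ is a cocycle in $N^1(m)$. Whether it is exact depends on its class in $H^1(N^\bullet(m))\simeq \mathrm{CH}^m(\Spec F,2m-1)\simeq K^{(m)}_{2m-1}(F)_\Q$, and Conjecture \ref{BS0} does not kill this group. The vanishing of $H^0$ only makes primitives unique up to coboundaries. Replacing $E^{m-1}$ by $E^{m-1}+db$ changes $E^{m-1}\cdot x[z]$ by the coboundary $d(b\cdot x[z])$, so no such adjustment can remove a nonzero class.

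You have to write the primitive down, as the paper does. The element
\[
H_m=\sum_{s=1}^{m-1}\sum_{t=1}^{s}\frac{1}{s!}\binom{s}{t}\rho_{m-s}(x)\cdot r^{s-t}\cdot\beta\cdot(d\beta)^{t-1}\in N^0(m)
\]
satisfies $dH_m=-(P'_m-P_m)-H_{m-1}\cdot x[z]$, so you can take $E^m=-H_m$. The $(x)$-terms cancel using $\frac{s-t}{s!}\binom{s}{t}=\frac{1}{(s-1)!}\binom{s-1}{t}$, and moving $x[z]$ past $\beta$ produces the sign.

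The bookkeeping becomes a two-line check if you sum over weights. Put $\rho=\sum_k\rho_k(x)$ and $P=\rho\,e^{r}$; in weight $2$ this gives $\rho_2+\rho_1r$, which differs from $\widetilde P_2$ by a term not involving $r(x,z)$. Then $dP=P\cdot x[z]$, and $e^{r+d\beta}-e^{r}=e^{r}\,d\bigl(\beta\,\phi(d\beta)\bigr)$ with $\phi(u)=\sum_{t\ge 1}u^{t-1}/t!$. The primitive is then $H=\rho\,e^{r}\beta\,\phi(d\beta)$.
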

\begin{proof}
Choose one $i$ e.g. $i=1$ , and consider changing $r(x_1,z)$  to $r'(x_1,z):=r(x_1,z)+dE$  for a cochain 
$E\in N^{-1}(1)$.  We write $r(x_1,z)=r$  in this proof.  
 For $n-k\geq 3$ set 
\[P'_{n-k}(x_i):=\left\{
\begin{array}{lc}
\displaystyle \sum_{s=0}^{n-k-1} \frac1{s!}\rho_{n-k-s}(x_1)r'(x_1,z)^s  &i=1\\
P_{n-k}(x_i) &  i\neq 1
\end{array}
\right.\]
and 
\[\widetilde{P}'_2(x_i):=\left\{
\begin{array}{lc}
\displaystyle \rho_2(x_1)+\rho_1(x_1)\cdot r'(x_1,z)-r(\rho_1(x_1),z)\cdot x_1[z]  &i=1\\
\widetilde{P}_2(x_i,z) &  i\neq 1.
\end{array}
\right.\]
We have the equalities
\[\begin{aligned}
&P'_n(x_1)-P_n(x_1)\\
=& \sum_{s=1}^{n-1}\sum_{t=1}^s\frac1{s!}
\begin{pmatrix}
s\\
t
\end{pmatrix}
\rho_{n-s}(x_1)r^{s-t}\cdot(dE)^t
\end{aligned}
\]
and 
\[
\begin{aligned}
&d\left(\sum_{s=1}^{n-1}\sum_{t=1}^s\frac1{s!}
\begin{pmatrix}
s\\
t
\end{pmatrix}
\rho_{n-s}(x_1)r^{s-t}\cdot E\cdot(dE)^{t-1}\right)\\
=&\sum_{s=1}^{n-2}\sum_{t=1}^s\frac1{s!}
\begin{pmatrix}
s\\
t
\end{pmatrix}
\rho_{n-s-1}(x_1)\cdot (x_1)\cdot r^{s-t}\cdot E\cdot(dE)^{t-1}\\
-&\sum_{s=2}^{n-1}\sum_{t=1}^{s-1}\frac{(s-t)}{s!}
\begin{pmatrix}
s\\
t
\end{pmatrix}
\rho_{n-s}(x_1)r^{s-t-1}((x_1)-x_1[z])E\cdot (dE)^{t-1}\\
-&\sum_{s=1}^{n-1}\sum_{t=1}^s\frac1{s!}
\begin{pmatrix}
s\\
t
\end{pmatrix}
\rho_{n-s}(x_1)r^{s-t}\cdot(dE)^t\\
=&-\sum_{s=1}^{n-1}\sum_{t=1}^s\frac1{s!}
\begin{pmatrix}
s\\
t
\end{pmatrix}
\rho_{n-s}(x_1)r^{s-t}\cdot(dE)^t\\
&-\sum_{s=1}^{n-2}\sum_{t=1}^s\frac1{s!}
\begin{pmatrix}
s\\
t
\end{pmatrix}
\rho_{n-1-s}(x_1)r^{s-t}\cdot E \cdot (dE)^{t-1}\cdot x_1[z].
\end{aligned}
\]
For each $k$ such that $1\leq k\leq n-2$ and each $k$-tuple $j_1,\cdots, j_k$  set the cochain
$D^{n-k}_{j_1\cdots j_k}(\text{new})$ to be 

\[D^{n-k}_{j_1\cdots j_k}-a_1c_{1j_1\cdots j_k}
\sum_{s=1}^{n-k-1}\sum_{t=1}^s\frac1{s!}
\begin{pmatrix}
s\\
t
\end{pmatrix}
\rho_{n-k-s}(x_1)r^{s-t}\cdot E\cdot(dE)^{t-1}.\]
Then the elements $D^{n-k}_{j_1\cdots j_k}(\text{new})$ satisfy the equations 
in  (I) of the assertion (1)  for $P_{n-k}'(x_i)$.  The cochain
\[\cP_n(X)
+
d\left(-a_1\sum_{s=1}^{n-1}\sum_{t=1}^s\frac1{s!}
\begin{pmatrix}
s\\
t
\end{pmatrix}
\rho_{n-s}(x_1)r^{s-t}\cdot E\cdot(dE)^{t-1}\right)
\]
is equal to 
\[\sum_i a_i P'_n(x_i)-\sum_{j_1} D^{n-1}_{j_1}(\text{new}) \cdot (z_{j_1}).\]
We consider changing $r(\rho_1(x_1),z)$ to $r'(\rho_1(x_1),z):=r(\rho_1(x_1),z)+dE$
for a cochain $E\in N^{-1}(1).$  We need to change the elements $D^2_{j_1\cdots j_{n-2}}$
to

\hfil$D^2_{j_1\cdots j_{n-2}}(\text{new}):=D^2_{j_1\cdots j_{n-2}}-a_1c_{1j_1\cdots j_{n-2}}E\cdot x_1[z].$
\hfil

\noindent Then the cochain

\hfil$\sum_i a_ic_{ij_1\cdots j_{n-3}}P_3(x_i)-\sum_{j_{n-2}}D^2_{j_1\cdots j_{n-2}}(\text{new})\cdot (z_{j_{n-2}})
$\hfil

\noindent is the same as the old one since
\[\begin{aligned}
&-a_1c_{1j_1\cdots j_{n-3}}\sum_{j_{n-2}}c_{1j_{n-2}}E\cdot x_1[z]\cdot (z_{j_{n-2}})\\
=& -a_1c_{1j_1\cdots j_{n-3}}E\cdot x_1[z]\cdot x_1[z]\\
=&0.
\end{aligned}
\]
Hence we do not need to change cochains $D^{n-k}_{j_1\cdots j_k}$  for $k\geq 3$.
\end{proof}

\begin{proof}  The proof of Theorem \ref{existc}  is given by induction on $n$. First we prove the following lemma.

\begin{lemma}
\label{homotopy}
 We assume {\rm Conjecture \ref{BS0}}.  Let $k$ be an integer greater than $1$. 
We denote by  $\A^1_t$ the scheme ${\rm Spec} \,F[t]$.
\begin{itemize} 
\item [(1)] Let $U$ be an open subset
of $\A^1_t$ which is not empty,  and let $C$ be a cocycle in $N^1(U)(k)$. 
Suppose that the points 
$\{t=0\}$ and $\{t=1\}$ are on $U$.  We denote by 
$i_0$ resp. $i_1$ the closed immersion of the subvariety of $U\times \s^{2k-1}$
defined by  $t=0$ resp. $t=1$ to $U\times \s^{2k-1}.$
Suppose that $C$ is in $N_{\{0,\,1\}}^1(U)(k)$. 
The following holds.
\begin{itemize}
\item[(a)] The cocycle $i_0^*C-i_1^*C\in N^1(k)$ is a coboundary.

\item[(b)] If $i_0^*C$ is a coboundary, then $C$ is a coboundary.
\end{itemize}
\item[(2)]  Let $C_1$ be a cocycle of $N^1({\rm Spec}\,F(t))(k)$ and let $U_1$ be an open subset of $\A^1_t$ such that
the closure $\ov{C_1}$ of $C_1$ in $U_1\times \s^{2k-1}$ is a cocycle of $N^1(U_1)(k).$ If $C_1$ is a coboundary,
then $\ov{C_1}$ is also a coboundary.
\end{itemize}
\end{lemma}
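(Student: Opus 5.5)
Throughout, the plan is to combine the homotopy (localization) property of higher Chow groups with Conjecture \ref{BS0}, which kills $CH^k(\cdot,2k)$ in the relevant places. By Proposition \ref{alteq1}, the cohomology $H^1(N^\bullet(X)(k))$ is $CH^k(X,2k-1)$ for smooth $X$. By Proposition \ref{alteq2}, we may work inside the subcomplex $N_{\{0,1\}}^\bullet(U)(k)$, where the pullbacks $i_0^*,i_1^*$ are defined on the cochain level and commute with $d$. The key fact will be homotopy invariance $CH^k(\A^1_F,m)\simeq CH^k(F,m)$, combined with the localization sequence for $U\subset\A^1_t$ with complement a finite set of closed points $p$:
\[
\cdots\to CH^k(\A^1,2k-1)\to CH^k(U,2k-1)\to \bigoplus_p CH^{k-1}(\kappa(p),2k-2)\to CH^k(\A^1,2k-2)\to\cdots.
\]

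For (1)(a), I would first treat the case $U=\A^1$. There the pullbacks $i_0^*$ and $i_1^*$ both invert the flat pullback $CH^k(F,2k-1)\to CH^k(\A^1,2k-1)$, so they agree on cohomology. For general $U$ I would use the localization sequence above. The boundary terms are $CH^{k-1}(\kappa(p),2(k-1))$, which vanish by Conjecture \ref{BS0} (for $k-1>0$, which holds since $k>1$). Hence $CH^k(\A^1,2k-1)\to CH^k(U,2k-1)$ is surjective, and therefore $CH^k(F,2k-1)\to CH^k(U,2k-1)$ is surjective. So every class on $U$ is pulled back from $\mathrm{Spec}\,F$. Both $i_0^*$ and $i_1^*$ compose with this pullback to the identity, so $i_0^*[C]=i_1^*[C]$, which is (a).

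For (1)(b), I would again use surjectivity: $[C]=\pi^*\beta$ with $\beta\in CH^k(F,2k-1)$. Then $\beta=i_0^*\pi^*\beta=i_0^*[C]=0$, so $[C]=0$, i.e.\ $C$ is a coboundary. The cocycle $C$ represents its class in $H^1(N_{\{0,1\}}(U)(k))\simeq H^1(N(U)(k))$, where the identification is Proposition \ref{alteq2}.

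For (2), the class of $\ov{C_1}$ in $CH^k(U_1,2k-1)$ maps to zero in $CH^k(F(t),2k-1)=\varinjlim_V CH^k(V,2k-1)$. Hence it restricts to zero on some smaller open $V\subset U_1$. By localization, the kernel of $CH^k(U_1,2k-1)\to CH^k(V,2k-1)$ is the image of $\bigoplus_{p\in U_1\setminus V}CH^{k-1}(\kappa(p),2k-1)$. I would instead argue via surjectivity from $CH^k(F,2k-1)$: the class is $\pi^*\beta$, and $\pi^*\beta$ restricted to $V$ is zero. Evaluating at an $F$-rational point of $V$ gives $\beta=0$. If $F$ is finite and $V$ has no rational point, I would use a closed point and the norm/transfer argument with $\Q$ coefficients, or simply note that $U_1\ni 0$ may be arranged when needed.

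The main obstacle is the cochain-level compatibility: one must check that the colimit identification and the pullbacks $i_0^*,i_1^*$ on the alternating moving subcomplexes really compute the maps on $CH$, and that we never need $CH^{k-1}(\kappa(p),2k-1)$ to vanish. I would circumvent the latter by the rational-point evaluation trick above.
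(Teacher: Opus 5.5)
Your proposal is correct and follows the paper's proof. In both, the localization sequence together with Conjecture \ref{BS0} (applied to $CH^{k-1}(\kappa(p),2k-2)$, which is where $k>1$ is used) shows that every class on $U$ lifts to $\A^1_t$, and homotopy invariance then gives (1)(a) and (1)(b); for (2), the paper likewise extends the bounding cochain of $C_1$ to a smaller open $U_2$ (your colimit step) and evaluates at a closed point there. Your transfer argument for a non-rational closed point, and your purely cohomological phrasing (which sidesteps the paper's cochain-level equation $d\,i_p^*\ov{D}=i_p^*\widetilde{C_1}$, valid only up to coboundary), are small refinements of the same argument.
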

\begin{proof}  We prove  (1).  We denote by $j_U$ the open immersion
of $U$ to $\A^1_t$. By \cite{B1} Corollary (0.2) we have an exact sequence
\[\cdots\to CH^k(\A^1_t, 2k-1) \overset{j_U^*}{\to}CH^k(U, 2k-1 ) \to CH^{k-1}(\A^1_t-U, 2k-2)\to\cdots.\]
By Conjecture \ref{BS0} we have $ CH^{k-1}(\A^1_t-U, 2k-2)=\{0\}.$ So there is a cocycle
$\widetilde{C}\in Z^k(\A^1_t,2k-1)$ such that $j_U^*\widetilde{C}=C$ in $CH^k(U, 2k-1 )$.
By Proposition \ref{alteq2} we can assume that the cocycle $\widetilde{C}$
belongs to $N^1_{\{0,1\}}(\A^1_t)(k)$. Then the assertion (1)  follows from
the homotopy invariance  $CH^k(\A^1_t, 2k-1)\simeq CH^k(F, 2k-1).$  For (2),  there exists a
cochain $D\in N^0(\text{Spec}\,F(t))(k)$ such that $dD=C_1$.  There exists a  non-empty open subset $U_2$ of $U_1$ 
such that the closure $\ov{D}$ of $D$ in $U_2\times \s^{2k-1}$  belongs to
 $N^0(U_2)(k)$,  
and the equality $d\ov{D}=\ov{C_1}$  holds in $N^1(U_2)(k)$.  There exists a cocycle $\widetilde{C_1}
\in Z^k(\A^1_t, 2k-1)$ such that $j_{U_1}^*\widetilde{C_1}=\ov{C_1}$ in $CH^k(U_1, 2k-1)$.  Let $p$ be a closed
point on $U_2$ such that $\ov{C_1}$ resp. $j_{U_2}^*\widetilde{C_1}$ resp.  $\ov{D}$ belongs to
$N^1_{\{p\}}(U_2)(k)$ resp. $N^1_{\{p\}}(U_2)(k)$ resp. $N^0_{\{p\}}(U_2)(k)$.
 Then we have $di_p^*\ov{D}=i_p^*\widetilde{C_1}$,
and so $\widetilde{C_1}$ is a coboundary by homotopy invariance.  Hence $\ov{C_1}$ is also a coboundary.
\end{proof}

We prove the assertion in the case where $n=2.$ In this case the assertions (I) and (II) of (1)  is empty.  We consider 
the assertion (III) of (1). By (\ref{dp2}) we have 
\[d\cP_2=\sum_i a_id\widetilde{P}_2(x_i,z)=\sum_ia_i\rho_1(x_i)[z]\cdot  x_i[z]=0.\]
We prove the assertion (2). We need to show that the cochain $\cP_2$ is a coboundary. 
 Let $F_0^\times$ be the subspace of $F(t)^\times_\Q$ generated by the elements of $F(t)^\times$
which satisfy \C0.   Let $\{y_l\}$ 
be a subset of $F(t)^\times$ such that $y_l$ satisfies \C0 for each $l$, and $\{y_l\}$ forms a  basis of $F_0^\times.$  Let $\{x_i\}$ be a finite subset of $F(t)^\times$,  and suppose that
for each $i$ the functions $x_i$ and $1-x_i$ satisfy \C0,  and  that 
$X:=\sum_i a_i\{x_i\}\in \Ker \delta'_2$ i.e. 

\hfil$\sum_ia_ix_i\wedge (1-x_i)=0$ in $(F(t)^\times_\Q)^{\wedge 2}.$\hfil

\noindent  Suppose that 
$(x_i)=\sum_l e_{il} (y_l)$ in $F^\times_0$  for each $i$,  and let
$x_i[y]\in N^1(\text{Spec}\,F(t))(1)$ be the cochain $\sum_l e_{il}(y_l)$.  
 There exists an open subset $U_t$ of $\A^1_t$ which contains the points $t=0$ and $t=1$,
 such that the closure of $\sum_ia_i\widetilde{P}_2(x_i)$
in $U_t\times \s^3$ is a cocycle in $N^1_{\{0,\,1\}}(U_t)(1)$ since intersecting properly with the faces is an open condition.
By Lemma \ref{homotopy} (1) (a)
the image of $(i_0^*-i_1^*)\cP_2(X)$ in  $\CH^2({\rm Spec}\,F, 3)$ is zero,  and there exists a cochain $D_2 \in N^0(2)$ such that

\hfil$dD_2=(i_0^*-i_1^*)\cP_2(X).$\hfil
 
\noindent Suppose that $(y_l(0))=\sum_j c_{lj} (z_j)$ in $F^\times_\Q$ for each $l$.

\noindent For each $j$ in $J$ let $z_j(s)$ be the linear
function such that $z_j(0)=1$ and $z_j(1)=z_j.$ Let 

$$m_l=\text{min}\{m\,|\, m>0, \,m\in \ZZ,\,\,\, mc_{lj}\in \ZZ
\text{ for all }j, \text{ and } y_l(0)^m=\prod_jz_j^{mc_{lj}} \text{ in } F^\times.\}$$
We have a cochain  $r(y_l(0),z)(s)$ in $N^0(\text{Spec}\,F(s))(1)$ which is a linear combination
of elements of the form $u(a,b)$, such that 
\[dr(y_l(0), z)(s)= (y_l(0))-\frac1{m_l}\left(\left(\frac{y_l(0)^{m_l}}{\prod_j z_j^{m_lc_{lj}}(s)}\right)+
\sum_jm_lc_{lj}(z_j(s))\right).\]
Let $Y_l(s)$ be the linear function such that $Y_l(0)=y_l(0)$ and $Y_l(1)=1.$ There is a
cochain $p(Y_l)(s)\in N^0(\text{Spec}\,F(s))(1)$ which is a linear combination of elements of the form
$u(a,b)$, such that $dp(Y_l)(s)=(Y_l(s))-\frac1{m_l}(Y_l(s)^{m_l})$ and $p(Y_l)(0)=r(y_l(0),z)(0).$
We set
\[r\bigl(x_i(0),y(0)\bigr)(s):= r(x_i(0), y(0))+\sum_l e_{il}\left(r(y_l(0),z)(s)-p(Y_l)(s)\right).\]
We have $r(x_i(0),y(0))(0)=r(x_i(0),y(0))$ and
\[dr(x_i(0), y(0))(1)=(x_i(0))-\sum_l e_{il}\left(\sum_j c_{lj}(z_j)\right).\]
Let $x_i(0)[y(0)](s)\in N^1({\rm Spec}\,F(s)(1)$ be the cochain
$$
\begin{aligned}
& (x_i(0))-dr\bigl(x_i(0),y(0)\bigr)(s)\\
=&\sum_je_{il}\bigl((y_l(0))-dr(y_l(0), z)(s)+dp(y_l)(s)\bigr)\\
=&\sum_le_{il}\left(\frac1{m_l}\left(\left(\frac{y_l(0)^{m_l}}{\prod_k z_k^{m_lc_{lj}}(s)}\right)+
\sum_jm_lc_{lj}(z_j(s))\right)+(Y_l(s))-\frac1{m_l}(Y_l(s)^{m_l})\right)
\end{aligned}
$$ and 
consider the cochain
\[
\begin{aligned}
&\widetilde{P}_2\bigl(x_i(0),y(0)\bigr)(s)\\
=&\rho_2\bigl(x_i(0)\bigr)+\rho_1\bigl(x_i(0)\bigr)\cdot 
r\bigl(x_i(0),y(0)\bigr)(s)-r\bigl(\rho_1(x_i(0)),y(0)\bigr)(s)
\cdot x_i(0)[y(0)](s)
\end{aligned}\]
in $N^1(\text{Spec}\,F(s))(2).$ Note that $\widetilde{P}_2(x_i(0),y(0))(0)=\widetilde{P}_2(x_i(0),y(0)).$  
The cochain  $\widetilde{P}_2\bigl(x_i(0),y(0)\bigr)(s)$ satisfies the equation
\begin{equation}
\label{dps2}
d\widetilde{P}_2(x_i(0),y(0))(s)=\rho_1(x_i(0))[y(0)](s)\cdot x_i(0)[y(0)](s).
\end{equation}
 One has a
similar cochain $\widetilde{P}_2(x_i(1),y(1))(s)$ in $N^1(\text{Spec}\,F(s))(2)$.  Consider the cochain
\[\cP_2(0)(s)=\sum_ia_i\widetilde{P}_2(x_i(0),y(0))(s) \text{ and } \cP_2(1)(s)=\sum_ia_i\widetilde{P}_2(x_i(1),y(1))(s).\]
in $N^1(\text{Spec}\,F(s))(2).$ We have

\hfil$\cP_2(0)(0)=i_0^*\cP_2(X)  \text{ and } \cP_2(1)(0)=i_1^*\cP_2(X).$\hfil

\noindent There exists an open set $U_s$ of $\A^1_s$ 
which contains $0$ and $1$, such that the closure
of  $\cP_2(0)(s)-\cP_2(1)(s)$ in $U_s\times \s^3$ is a cocycle in $N^1_{\{0,\,1\}}(U_s)(2)$. 
By the existence of the cochain $D_2$ and Lemma \ref{homotopy} (1) (b) the cochain  $\cP_2(0)(s)-\cP_2(1)(s)$
is  a coboundary,  and it follows that   $\cP_2(0)(1)-\cP_2(1)(1)$
is also a coboundary.  Hence we have the following conclusion.  For the element
$\sum_ia_i(\{x_i(0)\}-\{x_i(1)\})\in R'_2(F)$  there exists a cochain $D^2\in N^0(2)$ such that
\[dD^2=\sum_ia_i\bigl(\widetilde{P}_2(x_i(0))-\widetilde{P}_2(x_i(1))\bigr)\]
where
\[\widetilde{P}_2(x_i(0)):=\rho_2\bigl(x_i(0)\bigr)+\rho_1\bigl(x_i(0)\bigr)\cdot r\bigl(x_i(0),z\bigr)-
r\bigl(\rho_1(x_i(0)),z\bigr)\cdot x_i(0)[z]\]
and the cochain $\widetilde{P}_2(x_i(1))$ is defined similarly.

We assume that the assertions (1) and (2) hold for $n-1$  and consider the case of $n$.
Suppose that $\sum_ia_i\{x_i\}\in \Ker \delta'_n$  where
\[\delta'_n:\,\, \Q[F^\times]\to B'_{n-1}(F)\otimes F^\times_\Q,\,\,\{x\}\mapsto
\{x\}_{n-1}\otimes (x).\]
Since 
\[\delta'_n( \sum_ia_i\{x_i\})=\sum_{j_1} \bigl(\sum_i a_ic_{ij_1}\{x_i\}_{n-1}\bigr)\otimes z_{j_1}=0\]
we see that $\sum_i a_ic_{ij_1}\{x_i\}_{n-1}\in R'_{n-1}(F)$ for each $j_1.$
By induction hypothesis, 
for each $k$ such that $1\leq k\leq n-2$ and for each $k-$tuple $(j_1,\cdots, j_k)$ there exists a cochain
$D^{n-k}_{j_1j_2\cdots j_k}\in N^0(n-k)$ such that
\begin{equation}
\label{d1}
dD^{n-k}_{j_1j_2\cdots j_k}=\sum_ia_ic_{ij_1\cdots j_k}P_{n-k}(x_i)-\sum_{j_{k+1}}D^{n-k-1}_{j_1\cdots j_{k+1}}
\cdot (z_{j_{k+1}})
\end{equation}
for $1\leq k\leq n-3$, and
\begin{equation}
\label{d2}
dD^2_{j_1\cdots j_{n-2}}=\sum_i a_ic_{ij_1\cdots j_{n-2}}\widetilde{P}_2(x_i).
\end{equation}
Consider the cochain

\[\cP_n(X):=\sum_i a_iP_n(x_i)-\sum_{j_1}D^{n-1}_{j_1}\cdot (z_{j_1})\in N^1(n).\]
We want to modify this cochain to make it a cocycle.  The differentials of the
elements $D^2_{j_1\cdots j_{n-2}}$ depend on the sets $\{j_1,\,j_2,\cdots, j_{n-2}\}$, but are independent
of the ordering $j_1,\cdots, j_{n-2}$. So by  Conjecture \ref{BS0} there exist elements
$E^2_{j_1j_2\cdots j_{n-2}}\in N^{-1}(2)$ such that the elements
$dE^2_{j_1j_2\cdots j_{n-2}}+D^2_{j_1\cdots j_{n-2}}$ are independent of the ordering
$j_1,\cdots, j_{n-2}$. If $n-1\geq 4$  we have the equality 

\hfil$ D^2_{j_1\cdots j_{n-4}j_{n-3}j_{n-2}}=D^2_{j_1\cdots j_{n-4}j_{n-2}j_{n-3}}$\hfil

\noindent since
\[d^2D^4_{j_1\cdots j_{n-4}}=\left(\sum_{j_{n-3}<j_{n-2}} D^2_{j_1\cdots j_{n-4}j_{n-3}
j_{n-2}}-D^2_{j_1\cdots j_{n-4}j_{n-2}j_{n-3}}\right)(z_{j_{n-2}})\cdot (z_{j_{n-3}})=0.\]
So we can assume that $E^2_{j_1\cdots j_{n-4}j_{n-3}
j_{n-2}}=E^2_{j_1\cdots j_{n-4}j_{n-2}j_{n-3}}$.  Let $D^3_{j_1\cdots j_{n-3}}(\text{new})$
be the cochain 

\hfil$\displaystyle D^3_{j_1\cdots j_{n-3}}-\sum_{j_{n-2}}E^2_{j_1\cdots j_{n-4}j_{n-3}
j_{n-2}}\cdot (z_{j_{n-2}}).$\hfil

\noindent Then the cochain

\hfil$\dis \sum_i a_ic_{ij_1\cdots j_{n-4}}P_4(x_i)-\sum_{j_{n-3}}D^3_{j_1\cdots j_{n-4}j_{n-3}}(\text{new})\cdot 
(z_{j_{n-3}})$\hfil

\noindent is the same as the old one since

\[\begin{aligned}
 &\sum_{j_{n-3}}\left(\sum_{j_{n-2}}E^2_{j_1\cdots j_{n-4}j_{n-3}
j_{n-2}}\cdot (z_{j_{n-2}})\right)(z_{j_{n-3}})\\
=&\sum_{j_{n-3}<j_{n-2}}(E^2_{j_1\cdots j_{n-4}j_{n-3}
j_{n-2}}-E^2_{j_1\cdots j_{n-4}j_{n-2}j_{n-3}})(z_{j_{n-2}})\cdot (z_{j_{n-3}})\\
=&0.
\end{aligned}
\]
Hence we can replace $D^3_{j_1\cdots j_{n-4}j_{n-3}}$  with $D^3_{j_1\cdots j_{n-4}j_{n-3}}(\text{new}).$
Then the differentials of elements $D^3_{j_1\cdots j_{n-4}j_{n-3}}$ are independent of
the ordering  $j_1,\cdots, j_{n-3}.$  By Conjecture \ref{BS0} there exist elements
$E^3_{j_1\cdots j_{n-4}j_{n-3}}\in N^{-1}(3)$ such that the elements
$dE^3_{j_1\cdots j_{n-4}j_{n-3}}+D^3_{j_1\cdots j_{n-4}j_{n-3}}$ are independent of the
ordering $j_1,\cdots, j_{n-3}.$  We can continue this process until we obtain a set of
elements $D^{n-k}_{j_1j_2\cdots j_k}\in N^0(n-k)$
 for each $k-$tuple $(j_1,\cdots, j_k)$ which satisfy the equations (\ref{d1}) and (\ref{d2}),
 and which are independent of the ordering  $j_1,\cdots, j_k.$ For this new
 elements $D^{n-k}_{j_1j_2\cdots j_k}\in N^0(n-k)$ 
the cochain $\cP_n(X)$ is a cocycle. The assertion (1) is proved.

\noindent We consider the assertion (2). 
 Let $F_0^\times$ be the subspace of $F(t)^\times_\Q$ generated by the elements of $F(t)^\times$
which satisfy \C0.  Let $\{y_l\}$ 
be a subset of $F(t)^\times$ such that $y_l$ satisfies \C0 for each $l$, and $\{y_l\}$ forms a  basis of $F_0^\times.$  Let $\{x_i\}$ be a finite subset of $F(t)^\times$,  and suppose that
for each $i$ the functions $x_i$ and $1-x_i$ satisfy \C0,  and  that $X:=\sum_i a_i\{x_i\}\in 
\Ker \delta'_n(F(t))$. 
We suppose that for each $i$, we have $(x_i)=\sum_l e_{il}(y_l)$ in $F_0^\times.$ 
By the assertion (1) proved above, for each $k,\,\,1\leq k\leq n-2$ and for each $k$-tuple
$(l_1,\cdots, l_k)$,  there exists a cochain
$D^{n-k}_{l_1l_2\cdots l_k}\in N^0(\text{Spec}\,F(t))(n-k)$  which satisfies the equalities
\[dD^{n-k}_{l_1\cdots l_k}=\left\{
		\begin{aligned}
		&\sum_i a_ie_{il_1\cdots l_k}P_{n-k}(x_i)-\sum_{l_{k+1}}D^{n-k-1}_{l_1\cdots l_{k+1}}\cdot (y_{l_{k+1}})
		&1\leq k\leq n-3 \\
		&\sum_i a_ie_{il_1\cdots l_{n-2}}\widetilde{P}_2(x_i) & k=n-2
		\end{aligned}
		\right.
		\]
		and the cochain 
		$$\cP_n(X):=
		\sum_i a_i P_n(x_i)-\sum_{l_1} D^{n-1}_{l_1} \cdot (y_{l_1})
		$$ in $N^1(\text{Spec}\,F(t))(n)$ 
		is a cocycle. We can also assume that each cochain $D^{n-k}_{l_1\cdots l_k}$ is independent
		of the ordering $l_1,\cdots, l_k$.
For each $(n-2)$-tuple $l_1,\cdots, l_{n-2}$ there exists an open subset $U_t$ of $\A^1_t$ 
which contains the points $t=0$ and $t=1$, such that
the closure of $\sum_i a_ie_{il_1\cdots l_{n-2}}\widetilde{P}_2(x_i)$ in $U_t\times
\s^3$ is a cocycle of $N^1_{\{0,\,1\}}(U_t)(2)$. By Lemma \ref{homotopy} (2) 
and Proposition \ref{alteq2} there exists
a cochain $D^2_{l_1\cdots l_{n-2}}({\rm new})\in N^0_{\{0,\,1\}}(U_t)(2)$  such that

\hfil$ dD^2_{l_1\cdots l_{n-2}}({\rm new})=\sum_i a_ie_{il_1\cdots l_{n-2}}\widetilde{P}_2(x_i)$\hfil

\noindent  in $N^1_{\{0,\,1\}}(U_t)(2)$. By Conjecture \ref{BS0}
 there exists a cochain $E^2_{l_1\cdots l_{n-2}}\in
N^{-1}(\text{Spec}\, F(t))(2)$ such that

\hfil$dE^2_{l_1\cdots l_{n-2}}=D^2_{l_1\cdots l_{n-2}}({\rm new})-D^2_{l_1\cdots l_{n-2}}$\hfil

\noindent in $N^0(\text{Spec}\, F(t))(2)$. By the same argument as in the proof of (1) above  one can
inductively replace the elements $D^{n-k}_{l_1\cdots l_k}$  with  elements
$D^{n-k}_{l_1\cdots l_k}({\rm new})$  which are in $N^0_{\{0,\,1\}}(U_t)(n-k)$.  
Then by Lemma \ref{homotopy} (1) (a) the cochain $(i_0^*-i_1^*)\cP_n(X)
\in N^1(n)$  is a coboundary. 
Consider the cochain  

$$\sum_ia_ie_{il_1\cdots l_{n-2}}\widetilde{P}_2\bigl(x_i(0),y(0)\bigr)(s)
\in N^1(\text{Spec}\,F(s))(2)$$
for each $(n-2)$-tuple $l_1,\cdots,l_{n-2}$  defined in the same manner as 
in the case where $n=2$  of  the proof of the assertion (2).  There is an open subset
$U_s$ of $\A^1_s$ which contains the points $\{s=0\}$ and $\{s=1\}$, such that the closure of $\sum_ia_i e_{il_1\cdots l_{n-2}}\widetilde{P}_2\bigl(x_i(0),y(0)\bigr)(s)$ in
$U_s\times \s^3$ is a cocycle in $N^1_{\{0,\,1\}}(U_s)(2)$. Since $d(i_0^*D^2_{l_1\cdots l_{n-2}})=
\sum_ia_ie_{il_1\cdots l_{n-2}}\widetilde{P}_2\bigl(x_i(0),y(0)\bigr)(0),$\hfil

\noindent  by Lemma \ref{homotopy} (1) (b)  the closure of $\sum_ia_i
e_{il_1\cdots l_{n-2}}\widetilde{P}_2\bigl(x_i(0),y(0)\bigr)(s)$ in $U_s\times \s^3$ is a  coboundary.  Suppose that 

\hfil $dD_{l_1\cdots l_{n-2}}^2(0,s)=\sum_ia_ie_{il_1\cdots l_{n-2}}\widetilde{P}_2\bigl(x_i(0),y(0)\bigr)(s)$\hfil

\noindent  for a cochain $D^2_{l_1\cdots l_{n-2}}(0,s)\in N^0_{\{0,\,1\}}(U_s)(2)$.  Since the elements 
$D_{l_1\cdots l_{n-2}}^2(0, 0)$ and $i_0^*D^2_{l_1\cdots l_{n-2}}$
have the same coboundary $\sum_ia_ic_{il_1\cdots l_{n-2}}\widetilde{P}_2\bigl(x_i(0),z(0)\bigr)$, we can assume that $D_{l_1\cdots l_{n-2}}^2(0, 0)=i_0^*D^2_{l_1\cdots l_{n-2}}$  by adding a constant cocycle to $D^2_{l_1\cdots l_{n-2}}(0,s)$
if necessary.  For $m\geq 3$ consider the cochain  
\[P_m(x_i(0), y(0))(s):=\sum_{k=0}^{m-1} \frac1{k!}\rho_{m-k}(x_i(0))\{r(x_i(0),y(0))(s)\}^k\]
in $N^1(\text{Spec}\,F(s))(m)$. We have
\begin{equation}
\label{dpsn}
dP_m(x_i(0),y(0))(s)=\left\{
\begin{aligned}
P_{m-1}(x_i(0), y(0))(s)\cdot x_i(0)[y(0)](s) \quad&  m>3\\
\widetilde{P}_2(x_i(0),y(0))(s)\cdot x_i(0)[y(0)](s)\quad & m=3.
\end{aligned}
\right.
\end{equation}
For a $(n-3)$-tuple $l_1,\cdots,l_{n-3}$ consider the cochain
\[\begin{aligned}
&{\cal P}_3(0)_{l_1\cdots l_{n-3}}(s)\\
=&\sum_i a_ie_{il_1\cdots l_{n-3}}P_3(x_i(0),y(0))(s)-\sum_{l_{n-2}}
D^2_{l_1\cdots l_{n-2}}(0,s)\cdot [y_{l_{n-2}}(0)(s)]\\
&\in N^1(\text{Spec}\,F(s))(3)
\end{aligned}\]
where $[y_l(0)(s)]$ is the cochain 

\hfil$\displaystyle\frac1{m_l}\left(\left(\frac{y_l(0)^{m_l}}{\prod_j z_j^{m_lc_{lj}}(s)}\right)+
\sum_jm_lc_{lj}(z_j(s))\right)+(Y_l(s))-\frac1{m_l}(Y_l(s)^{m_l})$\hfil

\noindent  in $N^1(\text{Spec}\,F(s))(1).$ There exists an open subset $U_s$ of $\A^1_s$ which contains the points 
$0$ and $1$  such that the closure of ${\cal P}_3(0)_{l_1\cdots l_{n-3}}(s)$ in $U_s\times \s^5$ is a cocycle
in $N^1_{\{0,\,1\}}(U_s)(3)$.
By a similar argument as above  one can inductively find elements
$D_{l_1\cdots l_k}^{n-k}(0,s)$ and $D_{l_1\cdots l_k}^{n-k}(1,s)$ in $N^0_{\{0,\,1\}}(U_s)(n-k)$
for each $k,\,1\leq k\leq n-2$ and each $k$-tuple $\{l_1,\cdots, l_k\}$ which satisfy the equations
\[\begin{aligned}
&dD^{n-k}_{l_1\cdots l_k}(0,s)\\
=&\left\{
		\begin{aligned}
		&\sum_i a_ie_{il_1\cdots l_k}P_{n-k}(x_i(0),z(0))(s)-\sum_{l_{k+1}}D^{n-k-1}_{l_1\cdots l_{k+1}}
		(0,s)\cdot[y_{l_{k+1}}(0)(s)]
		&1\leq k\leq n-3 \\
		&\sum_i a_ie_{il_1\cdots l_{n-2}}\widetilde{P}_2(x_i(0),y(0))(s) & k=n-2
		\end{aligned}
		\right.
		\end{aligned}
		\]
and
\[\begin{aligned}
&dD^{n-k}_{l_1\cdots l_k}(1,s)\\
=&\left\{
		\begin{aligned}
		&\sum_i a_ie_{il_1\cdots l_k}P_{n-k}(x_i(1),y(1))(s)-\sum_{l_{k+1}}D^{n-k-1}_{l_1\cdots l_{k+1}}
		(1,s)\cdot [y_{l_{k+1}}(1)(s)]
		&1\leq k\leq n-3 \\
		&\sum_i a_ie_{il_1\cdots l_{n-2}}\widetilde{P}_2(x_i(1),y(1))(s) & k=n-2
		\end{aligned}
		\right.
		\end{aligned}
		\]
Let $\cP_n(X)(0)(s)$ resp.  $\cP_n(X)(1)(s)$  be the cochain 
\[\sum_ia_iP_n(x_i(0),y(0))(s)-\sum_{l_1}D_{l_1}^{n-1}(0,s)\cdot [y_{l_1}(0)(s)]\]
resp.
\[\sum_ia_iP_n(x_i(1),y(1))(s)-\sum_{l_1}D_{l_1}^{n-1}(1,s)\cdot [y_{l_1}(1)(s)]\]
in $N^1_{\{0,\,1\}}(U_s)(n)$.  The cochain $\cP_n(X)(0)(s)-\cP_n(X)(1)(s)$  is a cocycle, and
$\cP_n(X)(0)(0)-\cP_n(X)(1)(0)=(i_0^*-i_1^*)\cP_n(X)$ is a coboundary.   By Lemma \ref{homotopy} (1)
 (b) the cochain 
  $\cP_n(X)(0)(s)-\cP(1)(s)$ is a coboundary, and $\cP_n(X)(0)(1)-\cP_n(X)(1)(1)$  is also a coboundary.
This is the assertion (2). 
 \end{proof}

\section{The co-Lie algebra of Bloch-Kriz mixed Tate motives}
We recall the definition of the category of mixed Tate motives constructed by Bloch and Kriz
in \cite{BK}.  First  we recall  the definition  
of the bar complex $B(\cN)$ (see \cite{EM}, \cite{Hain} for details.) 
Let $\cN_+= \oplus_{r>0} N^\bullet(r)$.  As a vector space $B(\cN)$\index{$B(L, N,M)$} 
is equal to 
$\bigoplus_{s\ge 0} (\otimes ^s \cN_+)$, 
with the convention $(\otimes ^s \cN_+)=\QQ$ for $s=0$. 
An element $a_1\otimes \cdots \otimes a_s$  of 
 $\otimes^s   \cN_+$ 
  is written as
$[a_1|\cdots |a_s] $.

 The internal differential $d_I$ is defined by

\[ d_I([a_1|\cdots |a_s])
=\sum_{i=1}^{s}
(-1)^i [Ja_1|\cdots |Ja_{i-1}|da_{i}|\cdots |a_s] 
\]
where
$Ja=(-1)^{\deg a}a$.
The external differential $d_E$ is defined by
\[
 d_E( [a_1|\cdots |a_s] )
=\sum_{i=1}^{s-1}
(-1)^{i+1}[Ja_1|\cdots |(Ja_{i})a_{i+1}|\cdots |a_s].
\]
Then we have $d_Id_E+d_Ed_I=0$ and the map $d_E+d_I$ defines
a differential on $B(\cN)$. The degree of an element
$[a_1|\cdots |a_s]$
is defined to be $\sum_{i=1}^s \deg a_i -s$. There is a graded commutative 
shuffle product on $B(\cN)$  (\cite{May},  pp.  132-134)

\[[a_1|\cdots |a_r]\otimes [a_{r+1}|\cdots |a_{r+s}]\mapsto \sum_\mu(-1)^{\sigma(\mu)}
[a_{\mu(1)}|\cdots |a_{\mu(r+s)}]\]
where the sum is taken over the set of $(r,s)$ shuffles inside the  symmetric group
on $r+s$ letters, and the sign $(-1)^{\sigma(\mu)}$  is the sign of the graded permutation,
and where we take into account the degrees of $a_i$. When $\deg a_i=1$ for all $i$,
then $(-1)^{\sigma(\mu)}=1$ for all $\mu.$  The complex $B(\cN)$  also has a coproduct
\[\Delta:\,\,  B(\cN)\to B(\cN)\otimes B(\cN)\]
which is a map of complexes.  By this product and the coproduct 
the cohomology $H^0(B(\cN))$,  which we denote by
$\chi_F$  is a commutative Hopf algebra.  The category of mixed Tate motives which we denote
by  $\text{MT}_{\text BK}(F)$,   is defined by
Bloch and Kriz to be the category of Adams graded co-modules over $\chi_F$. $\chi_F$
is a direct sum  $\displaystyle \bigoplus_{r\geq 0} \chi_F(r)$,  and its augmentation
ideal is  $\displaystyle \bigoplus_{r> 0} \chi_F(r)$,  which we denote by $\chi_F^+$.  
The co-Lie algebra of $\text{MT}_{\text BK}(F)$  is by definition equal to $\chi_F^+/(\chi_F^+)^2$.

\noindent We recall the definition of motivic polylogarithms from \cite{BK}. 
For an element $x\in F^\times$  and $n\geq 2$,  the cochain
\[[\rho_n(x)]-[\rho_{n-1}(x)|(x)]+[\rho_{n-2}(x)|(x)|(x)]-\cdots +
(-1)^{(n-1)}[\rho_1(x)|(x)|\cdots |(x)]\]
of $B(\cN)$  is a cocycle of degree zero.  The class of this cocycle  in $H^0(B(\cN))=\chi_F$
is by definition the motivic polylogarithm $\bL_n(x)$.  $\bL_n(x)$  is contained in
the augmentation ideal $\chi_F^+$.
\begin{theorem}
\label{poly}
\begin{itemize}
\item[(1)]  Let $X=\sum_ia_i \{x_i\}$ be an element of $\Q[F^\times].$ Suppose that $X$ is in 
	$\Ker \delta_n'$.  Then the cocycle $\cP_n(X)$  defined in the assertion {\rm (III)} of 
	{\rm (1)}
	in {\rm Theorem \ref{existc}}  defines the same element  as $\sum_i a_i\bL_n(x_i)$
	of $\chi_F^+/(\chi_F^+)^2$.
	
	\item[(2)]  If  the element $X$ is in $R_n'(F)$,  then the element   $\sum_i a_i\bL_n(x_i)$
	in $\chi_F^+/(\chi_F^+)^2$ is zero.
\end{itemize}
\end{theorem}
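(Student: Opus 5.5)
The plan is to compare, in the bar complex $B(\cN)$, the one-term cocycle $[\cP_n(X)]$ with the cocycle $\sum_i a_i\Lambda_n(x_i)$. Here
\[\Lambda_n(x):=\sum_{k=0}^{n-1}(-1)^k[\rho_{n-k}(x)|(x)|\cdots|(x)]\]
is the cocycle representing $\bL_n(x)$. We will show that their difference is, modulo $d_I+d_E$-coboundaries, a sum of shuffle products of degree zero cocycles of positive Adams weight. Such a sum lies in $(\chi_F^+)^2$.

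The first step is to replace the $(x_i)$ by the basis elements $(z_j)$. Write $(x_i)=x_i[z]+dr(x_i,z)$. The bar-complex identity
\[[a|dr]\sim \pm[a\cdot r]\pm[da|r]\]
(from $d_E+d_I$ applied to $[a|r]$) lets us trade each bar entry $(x_i)$ for $x_i[z]=\sum_j c_{ij}(z_j)$. The price is products with powers of $r$. These are exactly the terms $\frac1{k!}\rho_{n-k}(x)r^k$ assembled in $P_m(x)$, together with the correction $r(\rho_1(x),z)\cdot x[z]$ in $\widetilde P_2$. Concretely, we will show that
\[\Lambda_n(x)\sim[P_n(x)]-[P_{n-1}(x)|x[z]]+\cdots\pm[\widetilde P_2(x)|x[z]|\cdots|x[z]]\pm[\rho_1(x)[z]|x[z]|\cdots|x[z]]\]
modulo coboundaries, by induction on $n$. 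The relations (\ref{dp2}) and (\ref{dpn}) are precisely what make the right-hand side a cocycle. The explicit homotopy is built from elements $[\rho_{n-k}(x)r^a|(x)|\cdots]$ weighted by $1/a!$, in the same way as the exponential-type bookkeeping in the proof of Proposition \ref{modifyr}.

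Next, expand $x_i[z]=\sum_j c_{ij}(z_j)$ and sum over $i$ with coefficients $a_i$. The term of length $k+1$ becomes
\[\sum_{j_1,\dots,j_k}\Bigl[\sum_i a_ic_{ij_1\cdots j_k}P_{n-k}(x_i)\Big|(z_{j_k})|\cdots|(z_{j_1})\Bigr].\]
By (I) of Theorem \ref{existc}, each first entry equals $dD^{n-k}_{j_1\cdots j_k}+\sum_{j_{k+1}}D^{n-k-1}_{j_1\cdots j_{k+1}}\cdot(z_{j_{k+1}})$. We then subtract the coboundary $(d_I+d_E)$ of $\sum\pm[D^{n-k}_{j_1\cdots j_k}|(z_{j_k})|\cdots|(z_{j_1})]$. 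Since $d(z_j)=0$, $d_I$ reproduces the $dD$ piece and $d_E$ produces the product pieces $[D\cdot(z)|\cdots]$. These telescope against the next length, level by level from $k=n-2$ down to $k=1$. After this, the length-one part is exactly $[\cP_n(X)]$, by (III) of Theorem \ref{existc}.

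What remains of length $\ge2$ consists of terms like $[\rho_1(x_i)[z]|(z_{j})|\cdots]$ and $[D\cdot(z_j)|(z_{j'})|\cdots]$ which did not cancel. I expect these to combine into shuffle products $[(z_{j_1})]\cdot[\cdots]$ of degree zero cocycles. The symmetry property (II), that $D^{n-k}_{j_1\cdots j_k}$ is independent of the ordering, is what allows the sum over ordered tuples to be rewritten as a sum over shuffles. The fact that each $[(z_j)]$ is a weight-one cocycle then puts these terms in $(\chi_F^+)^2$. This identification, and the sign conventions for $J$ and for the shuffle signs (all entries here have degree $1$, so the shuffle signs are trivial), is the step I expect to be the main obstacle. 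If a direct shuffle identification fails, I would try to prove that the total residual is $d$-exact modulo decomposables by induction on the tensor length.

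Assertion (2) then follows immediately. By Theorem \ref{existc} (2), for $X\in R'_n(F)$ we may choose the $D$'s so that $\cP_n(X)=dW$. Then $[\cP_n(X)]=[dW]=-d_I[W]$ is a coboundary in $B(\cN)$ (up to sign), so its class is zero. By (1) we conclude that $\sum_ia_i\bL_n(x_i)=0$ in $\chi_F^+/(\chi_F^+)^2$.
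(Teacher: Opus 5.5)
\textbf{There is a genuine gap in your last step: the residual is never shown to lie in $(\chi_F^+)^2$, and the input you propose for this, property (II), cannot supply it.}

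Your first two steps follow the paper's route. The per-$x$ identity
\[
\sum_{k=0}^{n-3}(-1)^k[P_{n-k}(x)|x[z]^{\otimes k}]+(-1)^{n-2}[\widetilde{P}_2(x)|x[z]^{\otimes (n-2)}]\equiv \bL_n(x)+(-1)^{n-2}[\rho_1(x)[z]|x[z]^{\otimes (n-1)}]
\]
is exactly what the paper proves. One correction: the homotopies that work are $[\rho_m(x)|(x)|\cdots|(x)|r(x,z)^a|x[z]|\cdots]$, of bar degree $-1$. Your elements $[\rho_{n-k}(x)r^a|(x)|\cdots]$ have bar degree $0$, so they cannot serve as homotopies. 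The telescoping via (I) is also the paper's, but your bookkeeping there misplaces (II). Besides $[D\cdot(z_{j_k})|\cdots]$, the operator $d_E$ applied to $[D^{n-k}_{j_1\cdots j_k}|(z_{j_k})|\cdots|(z_{j_1})]$ also produces terms $[D^{n-k}_{j_1\cdots j_k}|\cdots|(z_{j_m})\cdot(z_{j_{m-1}})|\cdots]$. These vanish after summation only because $D$ is symmetric in its indices while $(z_j)\cdot(z_{j'})=-(z_{j'})\cdot(z_j)$; that is where (II) is used. Once this is done, all $D$-terms telescope away. Nothing of the form $[D\cdot(z_j)|\cdots]$ survives, and the only residual is $(-1)^{n-2}\sum_i a_i[\rho_1(x_i)[z]|x_i[z]^{\otimes(n-1)}]$.

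Now to the last step, which you leave as an expectation. Since no $D$ appears in the residual, (II) says nothing about it. Write $\rho_1(x_i)[z]=\sum_j e_{ij}(z_j)$. The coefficient of $[(z_{j_0})|(z_{j_1})|\cdots|(z_{j_{n-1}})]$ is $T_{j_0\cdots j_{n-1}}=\sum_i a_ie_{ij_0}c_{ij_1}\cdots c_{ij_{n-1}}$. This is automatically symmetric in $j_1,\dots,j_{n-1}$, but not a priori symmetric between $j_0$ and the rest. Indeed, for a single $x$ with $(1-x)\wedge x\neq 0$ the residual is not even a cocycle.

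The missing input comes from applying $d$ to the case $k=n-2$ of (I) and using (\ref{dp2}):
\[
0=d^2D^2_{j_1\cdots j_{n-2}}=\sum_i a_ic_{ij_1\cdots j_{n-2}}\,\rho_1(x_i)[z]\cdot x_i[z].
\]
For $n=2$ this is just $d\cP_2(X)=0$. The products $(z_j)\cdot(z_{j'})$ with $j<j'$ are linearly independent, so this gives symmetry of $T$ under $j_0\leftrightarrow j_1$, hence full symmetry. The residual then equals $\frac1{n!}\sum T_{j_0\cdots j_{n-1}}[(z_{j_0})]\cdot[(z_{j_1})]\cdots[(z_{j_{n-1}})]$, a combination of shuffle products of weight-one degree-zero cocycles, and therefore lies in $(\chi_F^+)^2$.

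Your argument for (2) is the paper's. Note that the proof of (1) only uses the existence of $D$'s satisfying (I)--(III). This is what Theorem \ref{existc} (2) supplies, and it matters because $R'_n(F)\subset\Ker\delta'_n$ is not known.
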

\begin{proof} (1).  In the following  $A\equiv B$  means that $A-B$  is contained in 
coboundary of the bar complex $B(\cN).$  We have
\[d[D_{j_1}^{n-1}|(z_{j_1})]=-[dD_{j_1}^{n-1}|(z_{j_1})]+[D_{j_1}^{n-1}\cdot (z_{j_1})]\]
and 
\[dD^{n-1}_{j_1}=\sum_ia_ic_{ij_1}P_{n-1}(x_i)-\sum_{j_2}D^{n-2}_{j_1j_2}z_{j_2}\]
so that
\[\begin{aligned}
&\sum_{j_1}[D^{n-1}_{j_1}\cdot (z_{j_1})]\\
\equiv& \sum_{j_1}[dD^{n-1}_{j_1}|(z_{j_1})]\\
=&\bigl[\sum_ia_iP_{n-1}(x_i)|x_i[z]\bigr]-
\bigl[\sum_{j_1,j_2}D^{n-2}_{j_1j_2}\cdot (z_{j_2})|(z_{j_1})\bigr].
\end{aligned}
\]
We have
\[\begin{aligned}
&\sum_{j_1,j_2}d[D_{j_1j_2}^{n-2}|(z_{j_2})|(z_{j_1})]\\
=&\sum_{j_1,j_2}\bigl(-[dD^{n-2}_{j_1j_2}|(z_{j_2})|(z_{j_1})]+[D^{n-2}_{j_1j_2}\cdot (z_{j_2})|(z_{j_1})]
+[D^{n-2}_{j_1j_2}|(z_{j_2})\cdot (z_{j_1})]\bigr)
\end{aligned}
\]
and the sum 
\[\sum_{j_1,j_2}[D^{n-2}_{j_1j_2}|(z_{j_2})\cdot (z_{j_1})]=0\]
since we have 
\[D^{n-2}_{j_1j_2}=D^{n-2}_{j_2j_1}.\]
Hence we have
\[\begin{aligned}
&\bigl[\sum_{j_1,j_2}D^{n-2}_{j_1j_2}\cdot (z_{j_2})|(z_{j_1})\bigr]\\
\equiv&\sum_{j_1,j_2}[dD^{n-2}_{j_1j_2}|(z_{j_2})|(z_{j_1})]\\
=& \sum_{j_1,j_2}[\sum_ia_ic_{ij_1j_2}P_{n-2}(x_i)-\sum_{j_3}D^{n-3}_{j_1j_2j_3}\cdot (z_{j_3})|(z_{j_2})|(z_{j_1})]\\
=& \sum_i[a_iP_{n-2}(x_i)|x_i[z]|x_i[z]]-\sum_{j_1,j_2,j_3}[D^{n-3}_{j_1j_2j_3}\cdot (z_{j_3})|(z_{j_2})|(z_{j_1})].
\end{aligned}
\]
Continuing this computation we see that
\[\begin{aligned}
&[\cP_n(X)]\\
\equiv& \sum_{k=0}^{n-3}\sum_i(-1)^ka_i[P_{n-k}(x_i)|x_i[z]^{\otimes k}]+(-1)^{n-2}
\sum_{j_1,\cdots, j_{n-2}} [dD^2_{j_1\cdots j_{n-2}}|(z_{j_{n-2}})|(z_{j_{n-3}})|\cdots |(z_{j_1})].
\end{aligned}
\]
We compute $[P_n(x_i)].$ For $1\leq  k\leq n-1$ we have the equality
\[\begin{aligned}
&d[\rho_{n-k}(x)|r(x,z)^k]\\
=&-[\rho_{n-k-1}(x)\cdot (x)|r(x,z)^k]-k[\rho_{n-k}(x)|r(x,z)^{k-1}\cdot \bigl((x)-x[z]\bigr)]\\
&-[\rho_{n-k}(x)\cdot r(x,z)^k]
\end{aligned}
\]
where we set $\rho_0(x)=0.$  We have 
\[\begin{aligned}
& [\rho_{n-k}(x)\cdot r(x,z)^k]\\
\equiv &-[\rho_{n-k-1}(x)\cdot (x)|r(x,z)^k]-k[\rho_{n-k}(x)|r(x,z)^{k-1}\cdot \bigl((x)-x[z]\bigr)].
\end{aligned}
\]
We have
\[\begin{aligned}
&d[\rho_{n-k-1}(x)|(x)|r(x,z)^k]\\
=& -[\rho_{n-k-2}(x)\cdot (x)|(x)|r(x,z)^k]-k[\rho_{n-k-1}(x)|(x)|r(x,z)^{k-1}\cdot \bigl((x)-x[z]\bigr)]\\
&-[\rho_{n-k-1}(x)\cdot (x)|r(x,z)^k]-[\rho_{n-k-1}(x)|(x)\cdot r(x,z)^k]
\end{aligned}
\]
and so that
\[\begin{aligned}
&-[\rho_{n-k-1}(x)\cdot (x)|r(x,z)^k]\\
\equiv&[\rho_{n-k-2}(x)\cdot (x)|(x)|r(x,z)^k]+k[\rho_{n-k-1}(x)|(x)|r(x,z)^{k-1}\cdot \bigl((x)-x[z]
\bigr)] \\
&+[\rho_{n-k-1}(x)|(x)\cdot r(x,z)^k].
\end{aligned}
\]
Continuing this computation we obtain the following.
\[
\begin{aligned}
&[\rho_{n-k}(x)\cdot r(x,z)^k]\\
\equiv& k\biggl(-[\rho_{n-k}(x)|+[\rho_{n-k-1}(x)|(x)|-\cdots +(-1)^{n-k}[\rho_1(x)|
(x)^{\otimes (n-k-1)}|\biggr)\\
&\otimes r(x,z)^{k-1}\cdot\bigl((x)-x[z]\bigr)]\\
&+\biggl([\rho_{n-k-1}(x)|-[\rho_{n-k-2}(x)|(x)|+\cdots +(-1)^{n-k-2}[\rho_1(x)|(x)^{\otimes (n-k-2)}|\biggr)
(x)\cdot r(x,z)^k].
\end{aligned}
\]
Multiplying this with $\frac1{k!}$ we obtain

\[
\begin{aligned}
&\frac1{k!}[\rho_{n-k}(x)\cdot r(x,z)^k]\\
\equiv& \frac1{(k-1)!}\biggl(-[\rho_{n-k}(x)|+[\rho_{n-k-1}(x)|(x)|-\cdots 
+(-1)^{n-k}[\rho_1(x)|(x)^{\otimes (n-k-1)}|\biggr)\\
&\otimes r(x,z)^{k-1}\cdot \bigl((x)-x[z]\bigr)]\\
&+\frac1{k!}\biggl([\rho_{n-k-1}(x)|-[\rho_{n-k-2}(x)|(x)|+\cdots 
+(-1)^{n-k-2}[\rho_1(x)|(x)^{\otimes (n-k-2)}|\biggr)(x)\cdot r(x,z)^k].
\end{aligned}
\]

By taking the sum we obtain
\[
\begin{aligned}
&\sum_{k=1}^{n-1}\frac1{k!}[\rho_{n-k}(x)\cdot r(x,z)^k]\\
\equiv& \biggl(-[\rho_{n-1}(x)|+[\rho_{n-2}(x)|(x)|-\cdots 
+(-1)^{n-1}[\rho_1(x)|(x)^{\otimes (n-2)}|\biggr)\bigl((x)-x[z])\bigr]\\
&+\sum_{k=2}^{n-1}\frac1{(k-1)!}\biggl([\rho_{n-k}(x)|-[\rho_{n-k-1}(x)|(x)|-\cdots +(-1)^{n-k-1}
[\rho_1(x)|(x)^{\otimes (n-k-1)}|\biggr)\\
&\otimes r(x,z)^{k-1}\cdot x[z]]\\
=&-[\rho_{n-1}(x)|(x)]+[\rho_{n-2}(x)|(x)|(x)]-\cdots +(-1)^{n-1}[\rho_1(x)|(x)^{\otimes (n-1)}]\\
&+\sum_{k=1}^{n-1}\frac1{(k-1)!}\biggl([\rho_{n-k}(x)|-[\rho_{n-k-1}(x)|(x)|-\cdots +(-1)^{n-k-1}
[\rho_1(x)|(x)^{\otimes (n-k-1)}|\biggr)\\
&\otimes r(x,z)^{k-1}\cdot x[z]]
\end{aligned}
\]
Hence we have
\[\begin{aligned}
&[P_n(x)]\\
=&[\rho_n(x)]+\sum_{k=1}^{n-1}\frac1{k!}[\rho_{n-k}(x)\cdot r(x,z)^k]\\
\equiv& [\rho_n(x)]-[\rho_{n-1}(x)|(x)]+[\rho_{n-2}(x)|(x)|(x)]-\cdots
 +(-1)^{n-1}[\rho_1(x)|(x)^{\otimes (n-1)}]\\
&+\sum_{k=1}^{n-1}\frac1{(k-1)!}\biggl([\rho_{n-k}(x)|-[\rho_{n-k-1}(x)|(x)|-\cdots +(-1)^{n-k-1}
[\rho_1(x)|(x)^{\otimes (n-k-1)}|\biggr)\\
&\otimes r(x,z)^{k-1}\cdot x[z]]\\
=&\bL_n(x)\\
+&\sum_{k=1}^{n-1}\frac1{(k-1)!}\biggl([\rho_{n-k}(x)|-[\rho_{n-k-1}(x)|(x)|-\cdots +(-1)^{n-k-1}
[\rho_1(x)|(x)^{\otimes (n-k-1)}|\biggr)\\
&\otimes r(x,z)^{k-1}\cdot x[z]].
\end{aligned}
\]

We compute $[P_{n-1}(x_i)|x_i[z]].$ For $1\leq k\leq n-2$ we have
\[\begin{aligned}
&d\bl\rho_{n-1-k}(x)|r(x,z)^k|x[z]\br\\
=&-\bl\rho_{n-1-k-1}(x)\cdot(x)|r(x,z)^k|x[z]\br-k\bl\rho_{n-1-k}(x)|r(x,z)^{k-1}
\bigl((x)-x[z]\bigr)|x[z]\br\\
&-\bl\rho_{n-1-k}(x)\cdot r(x,z)^k|x[z]\br+\bl\rho_{n-1-k}(x)|r^k(x,z)\cdot x[z]\br.
\end{aligned}
\]
so that we have
\[\begin{aligned}
&\bl\rho_{n-1-k}(x)\cdot r(x,z)^k|x[z]\br\\
\equiv&-\bl\rho_{n-1-k-1}(x)\cdot (x)|r(x,z)^k|x[z]\br-k\bl\rho_{n-1-k}(x)|r(x,z)^{k-1}
\bigl((x)-x[z]\bigr)|x[z]\br\\
&+\bl\rho_{n-1-k}(x)|r(x,z)^k\cdot x[z]\br.
\end{aligned}
\]
We also have
\[\begin{aligned}
&d\bl\rho_{n-1-k-1}(x)|(x)|r(x,z)^k|x[z]\br\\
=& -\bl\rho_{n-1-k-2}(x)\cdot (x)|(x)|r(x,z)^k|x[z]\br-k\bl\rho_{n-1-k-1}(x)|(x)|
r(x,z)^{k-1}\cdot \bigl((x)-x[z]\bigr)|x[z]\br\\
&-\bl \rho_{n-1-k-1}(x)\cdot (x)|r(x,z)^k|x[z]\br-\bl \rho_{n-1-k-1}(x)|(x)\cdot r(x,z)^k|x[z]\br\\
&+[\rho_{n-1-k-1}(x)|x|r(x,z)^k\cdot x[z]\br
\end{aligned}
\]
and so that
\[\begin{aligned}
&-\bl \rho_{n-1-k-1}(x)\cdot(x)|r(x,z)^k|x[z]\br\\
\equiv&\bl\rho_{n-1-k-2}(x)\cdot (x)|(x)|r(x,z)^k|x[z]\br+k\bl\rho_{n-1-k-1}(x)|(x)|
r(x,z)^{k-1}\cdot \bigl((x)-x[z]\bigr)|x[z]\br\\
&+\bl \rho_{n-1-k-1}(x)|(x)\cdot r(x,z)^k|x[z]\br-\bl\rho_{n-1-k-1}(x)|(x)|r(x,z)^k\cdot x[z]\br.
\end{aligned}
\]
Continuing this computation we obtain the following.
\[\begin{aligned}
&\frac1{k!}\bl\rho_{n-1-k}(x)\cdot r(x,z)^k|x[z]\br\\
\equiv&\frac1{(k-1)!}\biggl(-\bl\rho_{n-1-k}(x)|+\bl\rho_{n-1-k-1}(x)|(x)|-\cdots +(-1)^{n-1-k}
\bl\rho_1(x)|(x)^{\otimes (n-1-k-1)}|\biggr)\\
&\otimes r(x,z)^{k-1}\cdot\bigl((x)-x[z]\bigr)|x[z]\br\\
+&\frac1{k!}\biggl(\bl\rho_{n-1-k}(x)|-\bl\rho_{n-1-k-1}(x)|x|+\cdots +(-1)^{n-1-k-1}
\bl\rho_1(x)|(x)^{\otimes (n-1-k-1)}|\biggr)\\
&\otimes r(x,z)^k\cdot x[z]\br\\
&+\frac1{k!}\biggl(\bl \rho_{n-1-k-1}(x)|-\bl \rho_{n-1-k-2}(x)|x|+\cdots +(-1)^{n-k-3}
\bl \rho_1(x)|(x)^{\otimes (n-1-k-2)}|\biggr)\\
&\otimes (x)\cdot r(x,z)^k|x[z]\br.
\end{aligned}
\]
By taking the sum we obtain the following.
\[\begin{aligned}
&[P_{n-1}(x)|x[z]]\\
=&\sum_{k=0}^{n-2}\frac1{k!}\bl\rho_{n-1-k}(x)\cdot r(x,z)^k|x[z]\br\\
=& [\rho_{n-1}(x)|x[z]] +\sum_{k=1}^{n-2}\frac1{k!}\bl\rho_{n-1-k}(x)\cdot r(x,z)^k|x[z]\br\\
\equiv&\sum_{k=0}^{n-2}\frac1{k!}\bigl(\bl\rho_{n-1-k}(x)|-\bl\rho_{n-1-k-1}(x)|x|+\cdots 
+(-1)^{n-1-k-1}\bl\rho_1(x)|(x)^{\otimes (n-1-k-1)}|\bigr)r(x,z)^k\cdot x[z]\br\\
&+\sum_{k=1}^{n-2}\frac1{(k-1)!}\bigl(\bl \rho_{n-1-k}(x)|-\bl\rho_{n-1-k-1}(x)|x|+\cdots +
(-1)^{n-1-k-1}\bl\rho_1(x)|(x)^{\otimes (n-1-k-1)}\bigr)\\
&\otimes r(x,z)^{k-1}\cdot x[z]|x[z]\br.
\end{aligned}
\]
By similar computation, for $1\leq t\leq n-3$ we have
\[\begin{aligned}
&[P_{n-t}(x)|x[z]^{\otimes t}]\\
=&\sum_{k=0}^{n-t-1}\frac1{k!}\bl\rho_{n-t-k}(x)\cdot r(x,z)^k|x[z]^{\otimes t}\br\\
=& [\rho_{n-t}(x)|x[z]^{\otimes t}] +\sum_{k=1}^{n-t-1}\frac1{k!}\bl\rho_{n-1-k}(x)
\cdot r(x,z)^k|x[z]^{\otimes t}\br\\
\equiv&\sum_{k=0}^{n-t-1}\frac1{k!}\biggl(\bl\rho_{n-t-k}(x)|-\bl\rho_{n-t-k-1}(x)|(x)|+\cdots 
+(-1)^{n-t-k-1}\bl\rho_1(x)|(x)^{\otimes (n-t-k-1)}\biggr)\\
&\otimes r(x,z)^k\cdot x[z]|x_i[z]^{\otimes (t-1)}\br\\
&+\sum_{k=1}^{n-t-1}\frac1{(k-1)!}\biggl(\bl \rho_{n-t-k}(x)|-\bl\rho_{n-t-k-1}(x)|(x)|+\cdots +
(-1)^{n-t-k-1}\bl\rho_1(x)|(x)^{\otimes (n-t-k-1)}\biggr)\\
&\otimes r(x,z)^{k-1}\cdot x[z]|x[z]^{\otimes t}\br.
\end{aligned}
\]
By taking the sum we obtain
\[\begin{aligned}
&\sum_{t=0}^{n-3}\sum_i(-1)^ta_i[P_{n-t}(x_i)|x_i[z]^{\otimes t}]\\
=&\sum_ia_i \bL_n(x_i)\\
+&(-1)^{n-3}a_i \biggl(\Bigl([\rho_2(x_i)|-[\rho_1(x_i)|(x_i)|\Bigr)x_i[z]|x_i[z]^{\otimes (n-3)}]\\
+&[\rho_1(x_i)|r(x_i,z)\cdot x_i[z]|x_i[z]^{\otimes (n-3)}]\biggr).
\end{aligned}
\]
By the assertion (I) of Theorem \ref{existc} we have
\[
\begin{aligned}
&\sum_{j_1,\cdots, j_{n-2}} [dD^2_{j_1\cdots j_k}|(z_{j_{n-2}})|(z_{j_{n-3}})|\cdots |(z_{j_1})]\\
=&\sum_ia_i[\widetilde{P}_2(x_i)|x_i[z]^{\otimes (n-2)}]\\
=&\sum_ia_i\Bigl([\rho_2(x_i)+\rho_1(x_i)r(x_i,z)-r(\rho_1(x_i),z)x_i[z]|\Bigr)x_i[z]^{\otimes (n-2)}].
\end{aligned}
\]
By computation we have
\[\begin{aligned}
&[\rho_1(x)\cdot r(x,z)|x[z]^{\otimes (n-2)}]\\
\equiv&-[\rho_1(x)|\bigr((x)-x[z]\bigr)|x_i[z]^{\otimes (n-2)}]+[\rho_1(x)|r(x,z)\cdot 
x[z]|x[z]^{\otimes (n-3)}]
\end{aligned}
\]
and 
\[
-[r(\rho_1(x),z)x[z]|x_i[z]^{\otimes (n-2)}]
\equiv-[\rho_1(x)-\rho_1(x)[z]|x[z]^{\otimes (n-1)}].
\]
Hence we have
\[\begin{aligned}
&[\rho_2(x)+\rho_1(x)r(x,z)-r(\rho_1(x),z)x[z]|x[z]^{\otimes (n-2)}]\\
\equiv&\bigl([\rho_2(x)|-[\rho_1(x)|(x)|\bigr)x[z]^{\otimes (n-2)}]
+[\rho_1(x)|r(x,z)x[z]|x_i[z]^{\otimes (n-3)}]\\
&+[\rho_1(x)[z]|x_i[z]^{\otimes (n-1)}].
\end{aligned}
\]
By taking sum we obtain
\[\begin{aligned}
&[\cP_n(X)]\\
\equiv& \sum_{k=0}^{n-3}\sum_i(-1)^ka_i[P_{n-k}(x_i)|x_i[z]^{\otimes k}]+(-1)^{n-2}
\sum_{j_1,\cdots, j_{n-2}} [dD^2_{j_1\cdots j_k}|z_{j_{n-2}}|z_{j_{n-3}}|\cdots |z_{j_1}]\\
=&\sum_ia_i\bigl(\bL_n(x_i)+(-1)^{n-2}[\rho_1(x_i)[z]|x_i[z]^{\otimes (n-1)}]\bigr).
\end{aligned}
\]
For each $(n-2)$-tuple $j_1,\cdots, j_{n-2}$ we have
\begin{equation}
\label{sym}
\begin{aligned}
&d^2D^2_{j_1\cdots j_{n-2}}=\sum_ia_ic_{ij_1\cdots j_{n-2}}\rho_1(x_i)[z]\cdot x_i[z]\\
=&0.
\end{aligned}
\end{equation}
We also have
\[\begin{aligned}
&\sum_ia_i[\rho_1(x_i)[z]|x_i[z]^{\otimes (n-1)}]\\
=&\sum_{j_1,\cdots, j_{n-2}}\sum_i a_ic_{ij_1\cdots j_{n-2}}
[\rho_1(x_i)[z]|x_i[z]|z_{j_{n-2}}|z_{j_{n-3}}|\cdots |z_{j_1}].
\end{aligned}
\]
By (\ref{sym}) it follows that the coefficient of 
$[z_{j_n}|z_{j_{n-1}}|z_{j_{n-2}}|z_{j_{n-3}}|\cdots |z_{j_1}]$ in the sum

\hfil$\sum_i a_ic_{ij_1\cdots j_{n-2}}[\rho_1(x_i)[z]|x_i[z]|z_{j_{n-2}}|z_{j_{n-3}}|\cdots |z_{j_1}]$\hfil

\noindent is the same as the coefficient of 
$[z_{j_{n-1}}|z_{j_n}|z_{j_{n-2}}|z_{j_{n-3}}|\cdots |z_{j_1}]$.  Hence the coefficients
of $[z_{j_n}|z_{j_{n-1}}|z_{j_{n-2}}|z_{j_{n-3}}|\cdots |z_{j_1}]$  are independent of the
ordering of $j_1,\cdots, j_n$.  It follows that the sum

\[\sum_ia_i [\rho_1(x_i)[z]|x_i[z]^{\otimes (n-1)}]\]
is a linear combination of the shuffle products

\hfil$[z_{j_1}]\cdot[z_{j_2}] \cdots  [z_{j_n}].$\hfil

\vskip 0.5cm

(2).  If $X$ is in $R_n'(F)$  then by (2) of Theorem \ref{existc}  $\cP_n(X)$ is a coboundary,
which means the class of $\cP_n(X)$ in $\chi_F$ is zero.  The assertion follows from (1) and this.
 
\end{proof}

\end{document}